\documentclass[leqno,11pt]{amsart}
\usepackage{amssymb,amsmath,latexsym,amsfonts,amsbsy, amsthm}
\usepackage{color}
\usepackage{graphicx}
\usepackage{enumerate}
\usepackage{hyperref}

\setlength{\oddsidemargin}{0mm} \setlength{\evensidemargin}{0mm}
\setlength{\topmargin}{0mm} \setlength{\textheight}{220mm} \setlength{\textwidth}{165mm}

\let\pa=\partial
\let\al=\alpha

\let\e=\varepsilon

\let\lam=\lambda

\let\f=\frac

\let\Om=\Omega

\let\ve=\varepsilon
\let\pa=\partial


\def\cN{{\cal N}}


\def\cN{{\mathcal N}}

\newcommand{\beq}{\begin{equation}}
\newcommand{\eeq}{\end{equation}}
\newcommand{\beqo}{\begin{equation*}}
\newcommand{\eeqo}{\end{equation*}}
\newcommand{\ben}{\begin{eqnarray}}
\newcommand{\een}{\end{eqnarray}}
\newcommand{\beno}{\begin{eqnarray*}}
\newcommand{\eeno}{\end{eqnarray*}}


\newtheorem{lem}{Lemma}[section]
\newtheorem{rmk}{Remark}[section]

\newtheorem{case}{Case}{\bf}{}


\newtheorem{theorem}{Theorem}[section]

\newtheorem{lemma}[theorem]{Lemma}
\newtheorem{proposition}[theorem]{Proposition}
\newtheorem{corol}[theorem]{Corollary}

\newtheorem{Theorem}{Theorem}[section]

\newtheorem{Remark}[Theorem]{Remark}


\newcommand{\ud}{\mathrm{d}}

\newcommand{\xx}{\mathbf{x}}

\newcommand{\nn}{\mathbf{n}}
\newcommand{\ee}{\mathbf{e}}
\newcommand{\te}{\tilde{\mathbf{e}}}

\newcommand{\mm}{\mathbf{m}}

\newcommand{\II}{\mathbf{I}}

\newcommand{\CR}{\mathcal{R}}

\newcommand{\CL}{\mathcal{L}}

\newcommand{\BP}{\mathbb{P}}
\newcommand{\BQ}{\mathbb{Q}}

\newcommand{\Qa}{{\mathbb{Q}}}
\newcommand{\Qp}{{\mathbb{Q}_{phy}}}

\newcommand{\BS}{{\mathbb{S}^2}}
\newcommand{\BR}{{\mathbb{R}^2}}

\setlength{\oddsidemargin}{0mm} \setlength{\evensidemargin}{0mm}
\setlength{\topmargin}{0mm} \setlength{\textheight}{220mm} \setlength{\textwidth}{165mm}

\begin{document}

\title[Stability of 2D point defects with singular potential]{Point defects in 2-D liquid crystals with singular potential: profiles and stability}

\author{Zhiyuan Geng}
\address{Basque Center for Applied Mathematics, Alameda de Mazarredo 14, 48009 Bilbao, Spain}
\email{zgeng@bcamath.org}

\author{Wei Wang}
\address{Department of Mathematics, Zhejiang University, 310027, Hangzhou, P. R. China}
\email{wangw07@zju.edu.cn}

\begin{abstract}
We study radial symmetric point defects with degree $\frac k2$ in 2D disk or $\BR$ in $Q$-tensor framework with singular bulk energy, which is defined by Bingham closure. First, we obtain the existence of solutions for the profiles of radial symmetric point defects with degree $\frac k2$ in 2D disk or $\BR$. Then we prove that the solution is stable for $|k|=1$ and unstable for $|k|>1$. Some identities are derived and used throughout the proof of existence and stability/instability.

\end{abstract}

 \maketitle


\section{Introduction}\label{sec:intro}

\subsection{Backgrounds of the problem}
The liquid crystal state is a distinct phase of matter observed between the crystalline solid and isotropic liquid.
The nematic liquid crystal is characterized by rod-like molecules which exhibit long-range orientational order
but no positional order. In general, there are three main continuum theories to model the nematic liquid crystals:
the Oseen-Frank theory\cite{LL}, the Landau-de Gennes theory\cite{dGP}, and the Ericksen theory\cite{E}. Among these models, only
the Landau-de Gennes model can describe both uniaxial and biaxial states of nematic liquid crystals. In this
model, the local state of nematic liquid crystals is described by a tensor valued order parameter $Q$ which takes value in the set
\begin{equation}
\nonumber \Qa=\{Q\in \mathbb{R}^{3\times 3}, Q_{ij}=Q_{ji}, Q_{ii}=0\}.
\end{equation}
The nematic liquid crystal is called isotropic at point $\xx$ when $Q(\xx)=0$; when $Q(\xx)$ has two equal
non-zero eigenvalues, it is called uniaxial and when $Q(\xx)$ has three distinct eigenvalues, it is called biaxial.

Physically, the tensor $Q$ can be interpreted as the second-order traceless moment of the orientational distribution
function $f$, which is
\begin{equation*}
Q(\xx)=\int_{\mathbb{S}_2}(\mm\mm-\frac{1}{3}I)f(\xx,\mm)d\mm.
\end{equation*}
Under this interpretation, all the eigenvalues of $Q$ should belong to the interval $(-\f{1}{3},\f{2}{3})$. In other words,
$Q$ should satisfy the natural physical constraint that
\begin{equation}\label{constraint}
Q\in \Qp:=\left\{Q\in\Qa:\quad \lam_1(Q), \lam_2(Q), \lam_3(Q)\in \left(-\f{1}{3},\f{2}{3}\right)\right\}.
\end{equation}
\par

The Landau-de Gennes energy, in the simplest form, can be written as
\begin{equation}\label{FLG}
\mathcal{F}_{LG}[Q]=\int_{\Om}\Big\{\f{L}{2}|\nabla Q(\xx)|^2+\mathfrak{f}_{p}(Q(\xx))\Big\}d\xx,
\end{equation}
where $\mathfrak{f}_{p}$ is the bulk energy density that accounts for bulk effects which takes the following polynomial form:
\begin{equation}
\nonumber \mathfrak{f}_{p}(Q)=\f{\bar a}{2}tr(Q^2)-\f{\bar b}{3}tr(Q^3)+\f{\bar c}{4}tr(Q^2)^2.
\end{equation}
Here $\bar a,\bar b,\bar c $ are constants dependent on materials and temperature with $\bar b>0, \bar c>0$. It is well-known that, when $\bar a<0$, $\mathfrak{f}_{p}(Q)$ attains its minimum on the manifold
\beno
\cN_{LG}=\Big\{Q\in \BQ:Q=s^{+}(\nn\otimes \nn-\f{1}{3}\II),\, \nn\in \mathbb{R}^3,|\nn|=1\Big\},
\eeno
where $s^{+}=\f{\bar b+\sqrt{\bar{b}^2+4\bar{a}\bar{c}}}{4\bar c}$. It is easy to see that $\cN_{LG}$ is a smooth submanifold of $\BQ$,
homemorphic to the real projective plane $\mathbb{R}\BP^2$, and contained in the sphere $\Big\{Q\in \BQ:|Q|=\sqrt{\f{2}{3}}s^{+}\Big\}$.
Critical points of Landau-de Gennes functional satisfy the Euler-Lagrange equation
\begin{equation}\label{eq:EL-LdG}
 L\Delta Q=\bar aQ-\bar b(Q^2-\frac{1}{3}|Q|^2I)+\bar cQ|Q|^2.
\end{equation}

Landau-de Gennes energy (\ref{FLG}) are widely used to study the static configurations
and dynamic behaviors of liquid crystal material in various settings,  see \cite{BaP, C, GM, MZ} and reference therein for examples.
Special solutions to Euler-Lagrange equation (\ref{eq:EL-LdG}) and their stabilities are of particular interests on both physical and analytical aspects.
However, due to the strong nonlinearity, there are only a few nontrivial solutions can be expressed explicitly. One of these examples
is the radial symmetric solution in a ball or in $\mathbb{R}^3$, named
hedgehog solution. Hedgehog solution is regarded as a potential candidate profile for the isolated point defect in 3-D region. The property and stability
of this solution are well studied and it is shown that it not stable for large $a^2$ and stable for small $a^2$, we refer  \cite{RV, GM,	 Ma, La, INSZ0, INSZ1} and references therein for related results.



Another class of solutions of  the Euler-Lagrange equation (\ref{eq:EL-LdG}), which can be expressed explicitly, is
the set of radial symmetric solutions  in  $\mathbb{R}^2$, which can be regarded as possible profiles
for the isolated point defects of different degrees in 2-D region. Here ``radial" means that the order tensors along
the radial direction share the same eigenvectors. Precisely speaking,
the radial symmetric point defects in 2-D plane with degree-$k/2$ are solutions with form
\begin{align}\label{prsolu}
    Q(r,\varphi)
      &=u(r)F_1(\varphi)+v(r)F_2,
\end{align}
where $(r,\varphi)$ is the polar coordinate in $\mathbb{R}^2$, and
\begin{align}
  F_1=\left(
        \begin{array}{ccc}
          \cos k\varphi & \sin k\varphi & 0 \\
          \sin k\varphi & -\cos k\varphi & 0 \\
          0 & 0 & 0 \\
        \end{array}
      \right),\quad
  F_2=3\mathbf{e}_3\otimes\mathbf{e}_3-\II=\left(
        \begin{array}{ccc}
          -1 & 0 & 0 \\
          0 & -1 & 0 \\
          0 & 0 & 2 \\
        \end{array}
      \right).
\end{align}
The boundary condition of these solutions is taken to be
\begin{align}\label{boundary}
\lim_{r\to+\infty}Q(r,\varphi)=s_+(\nn(\varphi)\otimes \nn(\varphi)-\frac{1}{3}\II),\quad \nn(\varphi)=(\cos{\frac{k}{2}\varphi},\sin{\frac{k}{2}\varphi},0),~~
\end{align}
which has degree $\frac k 2$ about origin as an $\mathbb{R}\BP^2$-valued map. Here $k\in\mathbb{Z}\setminus\{0\}$.
 To satisfy the Euler-Lagrange equation (\ref{eq:EL-LdG}),
$u(r)$ and $v(r)$ need to satisfy the following ODE system \cite{DRSZ, HQZ} on $(0,R)$:
\begin{align} \label{ODE:LdG-u}
  u''+\frac{u'}{r}-\frac{k^2}{r^2}u&=u[-\bar{a}+2\bar{b}v+\bar{c}(6v^2+2u^2)] , \\ \label{ODE:LdG-v}
  v''+\frac{v'}{r}&=v[-\bar{a}-\bar{b}v+\bar{c}(6v^2+2u^2)]+\frac{\bar{b}}{3}u^2.
\end{align}
Existence of solution to the system (\ref{ODE:LdG-u})-(\ref{ODE:LdG-v}) with suitable boundary condition has been established in \cite{INSZ2}.
More importantly, it has also been proved
that the radial symmetric solutions are unstable for $k>1$ \cite{INSZ2} and stable for $k=1$ \cite{GWZZ, INSZ3}.

Although the phenomenological Landau-de Gennes theory is widely studied and has made great progresses, there is an apparent drawback in it that
the energy has no term to enforce the physical constraint $Q\in\Qp$. Therefore, the Landau-de Gennes energy may provide non-physical prediction which violate the
constraint (\ref{constraint}).
For this reason,
%
based on the homogeneous Maier-Saupe energy from molecular theory
\begin{align}\label{energy:Maier-Saupe}
\mathcal{A}[\rho]=\int_\BS \Big\{ \rho(\mm)\ln\rho(\mm) +\alpha\rho(\mm)\Big( \int_\BS |\mm\times\mm'|^2\rho(\mm')d\mm'\Big)\Big\} d\mm,
\end{align}
Ball-Majumdar\cite{BM} proposed a ``singular" bulk energy:
\begin{align}
\tilde{\mathfrak{f}}_{S}(Q)=\inf_{\rho \in \mathcal{P}_Q} \int_\BS \rho(\mm)\ln\rho(\mm) d\mm -\alpha |Q|^2,
\end{align}
 where
 \begin{align}
 \mathcal{P}_Q:=\Big\{\rho(\mm): \int_\BS\rho(\mm)d\mm=1, \int_\BS(\mm\otimes\mm-\frac13I)\rho(\mm)d\mm=Q\Big\}.
 \end{align}
An important feature of bulk energy $\tilde{\mathfrak{f}}_{S}$ is its logarithmic divergence when one of eigenvalues of $Q$ tends to $-1/3$ or $2/3$.
Thus, $\tilde{\mathfrak{f}}_{S}$ is only defined on $\Qp$. Readers are referred to \cite{BM,BP,Evans2016partial,GT} and references therein for results on variational problems with singular bulk potential modeling liquid crystal.

In \cite{HLW}, the authors obtained Ball-Majumdar's singular bulk energy $\tilde{\mathfrak{f}}_{S}$
 in another way, called Bingham closure,
which has been used to approximately calculate fourth-moment $\langle\mm\otimes\mm\otimes\mm\otimes\mm\rangle_{\rho}$
by using only the information of second moment $\langle\mm\otimes\mm\rangle_{\rho}$. For a given distributional function $\rho(\mm)$ in $\mathcal{A}_Q$,
the Bingham closure is to use the quasi-equilibrium distribution
\beq\nonumber
\rho_Q=\frac{1}{Z_Q}\exp(B_Q:\mm\mm),\qquad Z_Q=\int_{\mathbb{S}^2}\exp(B_Q:\mm\mm)d\mm
\eeq
to approximate $\rho$. Here, $B_Q\in\Qa$ is uniquely determined by $Q$ (for the proof of this fact, see \cite{BM} or \cite{LWZ})by the relation
\beq\label{rebq}
\frac{\int_{\mathbb{S}^2}(\mm\mm-\frac{1}{3}I)\exp(B_Q:\mm\mm)d\mm}{\int_{\mathbb{S}^2}\exp(B_Q:\mm\mm)d\mm}=Q,
\eeq
for $Q\in\Qp$.
Using $\rho_Q$ to replace $\rho$ in (\ref{energy:Maier-Saupe}), the Maier-Saupe energy (\ref{energy:Maier-Saupe}) reduces to
\begin{align}\label{energy:ms-0}
\mathfrak{f}_{S}(Q)=  Q:B_Q-\ln{Z_Q}-\frac{\alpha}{2}|Q|^2.
\end{align}
It is not difficult to check that $\tilde{\mathfrak{f}}_{S}$ and $\mathfrak{f}_{S}$ are indeed equivalent. Throughout this paper, we use the formulation in (\ref{energy:ms-0}).

The critical points of $\mathfrak{f}_{S}$ satisfy
\beq\label{crifb}
\f{\partial \mathfrak{f}_{S}}{\partial Q}:= B_Q-\alpha Q=0,
\eeq
of which solutions have been completely analyzed, see \cite{FS, LZZ, ZWFW} or Proposition \ref{prop:critical}
and \ref{prop:critical-stab}. In this paper, we consider the case of $\alpha>7.5$ in which
$Q=0$ is no longer a minimizer of  $\mathfrak{f}_{S}$ and $\mathfrak{f}_{S}$ attains its minimum only on the manifold
\beno
\cN=\Big\{Q\in \BQ:Q=s_2(\nn\otimes \nn-\f{1}{3}\II),\, \nn\in \mathbb{R}^3,|\nn|=1\Big\},
\eeno
with $s_2$ be a constant only depend on $\alpha$.

Combining the elastic energy, the total energy functional is given by(for simplicity we take $L=1$):
\beq\label{energy}
\mathcal{F}(Q,\nabla Q)=\int\left(\f{1}{2}|\nabla Q(\xx)|^2+\mathfrak{f}_{S}(Q)\right)d\xx,
\eeq
and the corresponding  Euler-Lagrange equation is
\beq\label{eleq}
\Delta Q=B_Q-\alpha Q.
\eeq

The aim of this paper is to study the profiles of point defects in the plane $\mathbb{R}^2$ as well as their
stabilities/instabilities using the $Q$-tensor framework with the singular energy (\ref{energy}).
For this, we consider the radial symmetric solutions of the Euler-Lagrange equation (\ref{eleq}) in a disk
$\mathbb{B}_R:=\{(x,y)| x^2+y^2\le R^2\}$ $(R\in(0,\infty)$ or $R=\infty$) on $\mathbb{R}^2$.
As introduced previously, $Q$ has the following form:
\begin{equation}\label{qform}
Q(r,\varphi)=u(r)F_1(\varphi)+v(r)F_2(\varphi),
\end{equation}
where $(r,\varphi)$ is the polar coordinate in $\mathbb{B}_R$, and
\begin{eqnarray}\label{def:F}
  F_1=\left(
        \begin{array}{ccc}
          \cos k\varphi & \sin k\varphi & 0 \\
          \sin k\varphi & -\cos k\varphi & 0 \\
          0 & 0 & 0 \\
        \end{array}
      \right),\qquad
  F_2=\left(
        \begin{array}{ccc}
          -1 & 0 & 0 \\
          0 & -1 & 0 \\
          0 & 0 & 2 \\
        \end{array}
      \right).
\end{eqnarray}
The boundary condition is taken as
\begin{equation}\label{BC:Q}
  Q(R,\varphi)=s_2(\ee^{(k)}_\varphi\otimes \ee^{(k)}_\varphi-\frac{1}{3}I_d),\quad
  \ee_\varphi=(\cos{\frac{k\varphi}{2}},\sin{\frac{k\varphi}{2}},0), \quad k=\pm1,\pm2,\ldots.
\end{equation}
Here if $R=\infty$, $p(R)$ is interpreted as $\lim_{r\to\infty} p(r)$ for a function $p$ whose limit at infinity exists.

Since $B_Q$ and $Q$ share the same eigenvectors(see Proposition \ref{prop:bingham}),  $B_Q$ can also be written as
\begin{equation}\label{bform}
  B_Q(r, \varphi)=f(r)F_1(\varphi)+g(r)F_2(\varphi).
\end{equation}
Here $(f,g)$ are only dependent on $(u,v)$ (and not depend on $\varphi$) through the following relations:
\begin{align*}
\frac{\int_{\mathbb{S}^2}(m_1^2-m_2^2)\exp({f(m_1^2-m_2^2)+g(2m_3^2-m_1^2-m_2^2)})
d\mm}{\int_{\BS}\exp({f(m_1^2-m_2^2)+g(2m_3^2-m_1^2-m_2^2)})d\mm}=2u,\\
\frac{\int_{\mathbb{S}^2}(2m_3^2-m_1^2-m_2^2)\exp({f(m_1^2-m_2^2)+g(2m_3^2-m_1^2
-m_2^2)})d\mm}{\int_{\BS}\exp({f(m_1^2-m_2^2)+g(2m_3^2-m_1^2-m_2^2)})d\mm}=6v.
\end{align*}
We write $f=f(u,v)$ and $g=g(u,v)$.

With the assumed formulation (\ref{qform}), $Q$ satisfies (\ref{eleq}) if and only if $(u,v)$ satisfies the following the ODE system on $(0,R)$:
\begin{align}\label{ODE:u}
  u''+\frac{u'}{r}-\frac{k^2}{r^2}u &=f(u,v)-\alpha u,  \\
 \label{ODE:v} v''+\frac{v'}{r} &=g(u,v)-\alpha v.
\end{align}
The boundary conditions (\ref{BC:Q}) at $x=R$ for $Q$ are equivalent to
\begin{equation}\label{BC:uv-R}
 u(R)=\frac{s_2}{2},\quad v(R)=-\frac{s_2}{6}.
\end{equation}
In addition, as $Q$ is smooth at $x=0$, we have
\begin{equation}\label{BC:uv-0}
u(0)=0,\quad v'(0)=0.
\end{equation}

Note that the ODE system (\ref{ODE:u})-(\ref{ODE:v}) is the Euler-Lagrange equation for the reduced energy functional:
\begin{align}\label{energyuv}
\mathcal{E}(u,v)=&\int_0^R \bigg\{\Big((u')^2+3(v')^2+\frac{k^2}{r^2}u^2\Big)-\ln{\int_{\mathbb{S}^2}
e^{f(u,v)(m_1^2-m_2^2)+g(u,v)(2m_3^2-m_1^2-m_2^2)}d\mm}\\
&\qquad+2f(u,v)u+6g(u,v)v-\alpha(u^2+3v^2)\bigg\}rdr.\nonumber
\end{align}
We can also define the reduced energy density corresponding to (\ref{energyuv}):
\begin{align}\label{densityuv}
e_1(u,v;r)=&(u')^2+3(v')^2+\frac{k^2}{r^2}u^2-\ln{\int_{\mathbb{S}^2}
e^{f(u,v)(m_1^2-m_2^2)+g(u,v)(2m_3^2-m_1^2-m_2^2)}d\mm}\\
\nonumber&+2f(u,v)u+6g(u,v)v-\alpha(u^2+3v^2).
\end{align}
\par
A solution $(u, v)$ of (\ref{ODE:u})-(\ref{ODE:v}) and (\ref{BC:uv-R})-(\ref{BC:uv-0}) is called a local minimizer of the 1-D reduced energy of (\ref{energyuv}) if
\begin{align}\label{def:min-1d}
\mathcal{J}(\mu,\nu) &\triangleq\int_0^R\frac{d^2}{dt^2}\Big|_{t=0}\Big( e_1(u+t\mu, v+t\nu; r)-e_1(u,v; r)\Big) rdr \\
\nonumber =2 &\int_0^{R}\Big\{({\partial_r\mu})^2 +\mu^2\Big(\frac{d}{de-c^2}-\alpha+\frac{k^2}{r^2}\Big)+3({\partial_r\nu})^2
+3\nu^2\Big(\frac{e}{de-c^2}-\alpha\Big) -\frac{2\sqrt{3}c\mu\nu}{de-c^2}\Big\} rdr\geq 0
\end{align}
for all $\mu, \nu\in C_c^\infty(0,R)$, where $c,d,e$ will be defined in the next section. One can also extend such definition to the space $H_0^1((0,R),rdr) \times H_0^1((0,R),rdr)\bigcap L^2((0,R),\frac{1}{r}dr)$ since $C_c^\infty((0,R),rdr)$ is dense in $H_0^1((0,R),rdr)$.

\par
For $V\in C_c^\infty(B_R)$, we  define
\begin{align}\label{ineq:main-0}
I(V) &\triangleq\frac{d^2}{dt^2}\int_{B_R}\Big\{ \frac{1}{2}|\nabla(Q+tV)|^2-\frac{\alpha}{2}|Q+tV|^2+B_{Q+tV}:(Q+tV)-\ln{Z_{Q+tV}}\\
\nonumber&\qquad\quad-[\frac{1}{2}|\nabla(Q)|^2-\frac{\alpha}{2}|Q|^2+B_{Q}:Q-\ln{Z_{Q}} ]\Big\}dx \\
\nonumber &=\int_{B_R}\Big\{ |\nabla V|^2-\alpha|V|^2 +\frac{\delta B}{\delta Q}(V):V \Big\} dx
\end{align}
where $\frac{\delta B}{\delta Q}(V)$ will be calculated in Section 4. The definition in the last line can be extended to all function $V\in H_0^1(B_R,\mathcal{Q})$. We say that a solution $Q$ to the Euler-Lagrange equation is a local minimizer of the
(\ref{energy}) if $\mathcal{I}_Q(V)\ge 0$ for all $V\in H_0^1(B_R,\mathcal{Q})$.

\subsection{Main results}
The main goal of this paper is to study the stabilities of degree-$k/2$ radial symmetric point defects in two dimensional plane.
The first step is to study existences of such kind solutions for all $k$. For this, we proved the existence of the solution which
satisfies $u>0,v<0$ of the above ODE systems in finite and infinite domains and proved the monotonicity of $(u,v)$ as
well as other qualitative properties. This is our first theorem.
\begin{theorem}\label{thm:existence}
For $\alpha>7.5$, $k\in \mathbb{Z}\backslash\{0\}$ and $R\in(0,\infty]$, there exists an solution
$u\in C^2([0,R])\cap C^\infty((0,R)),v\in C^\infty([0,R])$ of the ODE system (\ref{ODE:u})-(\ref{ODE:v})
with boundary condition (\ref{BC:uv-R})-(\ref{BC:uv-0}). In addition, the solution $(u,v)$ we obtained is a local minimizer of the reduced
functional $\mathcal{E}$ in (\ref{energyuv}) and satisfies
\beq
u>0,\quad v<0,\quad u'>0,\quad v'<0 \text{ on }(0,R).
\eeq
\end{theorem}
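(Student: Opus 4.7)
My plan is to treat the finite-$R$ and infinite-$R$ cases in sequence by the direct method applied to the reduced energy $\mathcal{E}$ in~\eqref{energyuv}. For $R<\infty$ I would minimize $\mathcal{E}$ over
\[
\mathcal{A}_R=\Big\{(u,v):\ u\in H^1((0,R),r\,dr)\cap L^2((0,R),r^{-1}dr),\ v\in H^1((0,R),r\,dr),\ \eqref{BC:uv-R}\text{--}\eqref{BC:uv-0}\text{ hold},\ Q(u,v)\in\Qp\text{ a.e.}\Big\}.
\]
The class is nonempty (smooth interpolants of the boundary data with eigenvalues kept away from $\{-1/3,2/3\}$ are admissible), $\mathcal{E}$ is bounded below since $\mathfrak{f}_S$ attains a finite minimum on $\cN$, and the weighted Dirichlet terms $(u')^2+3(v')^2+k^2u^2/r^2$ provide coercivity and weak compactness of a minimizing sequence in the appropriate weighted Sobolev space.

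Weak lower semicontinuity is the next step. The bulk piece $Q:B_Q-\ln Z_Q$ equals the Ball--Majumdar entropy functional on $\Qp$, which is convex and lower semicontinuous (it is the Legendre dual of a smooth strictly convex function of $B_Q$), while $-\tfrac{\alpha}{2}|Q|^2$ is continuous under the strong $L^2$ convergence obtained from compact embedding. The pointwise constraint passes to the weak limit because any violation would force the entropy to $+\infty$ by the logarithmic blow-up of $\mathfrak{f}_S$ at $\partial\Qp$, contradicting the uniform energy bound. This produces a minimizer $(u,v)$; elliptic bootstrapping for~\eqref{ODE:u}--\eqref{ODE:v} then gives $C^\infty$ smoothness away from $r=0$, and the boundary behavior $u(0)=0$, $v'(0)=0$ upgrades this to $u\in C^2([0,R])$, $v\in C^\infty([0,R])$.

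The sign and monotonicity assertion is the main qualitative step. For the signs, I exploit the $m_1\leftrightarrow m_2$ symmetry of the Bingham exponent: $f(u,v)$ is odd in $u$ while $g(u,v)$ is even in $u$, so $\mathcal{E}(|u|,v)=\mathcal{E}(u,v)$ and WLOG $u\geq 0$; the strong maximum principle applied to the linear form of~\eqref{ODE:u} (writing $f(u,v)-\alpha u=a(r)u$ with bounded $a(r)$) upgrades this to $u>0$ on $(0,R)$. For $v<0$, the negative boundary value $v(R)=-s_2/6$ combined with a reflection-truncation competitor on any subinterval where $v\geq 0$ gives $v\leq 0$, and the strong maximum principle yields strict inequality. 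The monotonicity $u'>0,\ v'<0$ would be obtained by comparing with a suitable monotone competitor built by radial rearrangement: the weighted Dirichlet energy strictly decreases under monotone rearrangement of a nonmonotone function, while the pointwise bulk term is only reshuffled in $r$. Strict monotonicity then follows from the strong maximum principle applied to the equations satisfied by $u'$ and $v'$ obtained from differentiating~\eqref{ODE:u}--\eqref{ODE:v}.

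For $R=\infty$ I take $R_n\to\infty$ with minimizers $(u_n,v_n)$ produced as above. The physical constraint gives uniform $L^\infty$ bounds, and interior elliptic estimates yield uniform $C^{k,\gamma}_{\mathrm{loc}}$ bounds. A diagonal subsequence converges in $C^2_{\mathrm{loc}}((0,\infty))$ to a solution $(u_\infty,v_\infty)$ inheriting nonnegativity of $u_\infty,\ -v_\infty,\ u_\infty',\ -v_\infty'$; strict inequalities follow again by the strong maximum principle, and monotonicity plus the ODE force the unique limits $u_\infty(\infty)=s_2/2$, $v_\infty(\infty)=-s_2/6$ on $\cN$. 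The main obstacle throughout is the monotonicity step: because $\mathfrak{f}_S$ couples $u$ and $v$ nontrivially through the Bingham potential, a naive simultaneous rearrangement need not preserve the bulk contribution, so one must either freeze one variable while rearranging the other or work directly with Sturm/maximum-principle arguments applied to the differentiated system for $(u',v')$.
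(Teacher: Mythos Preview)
Your overall skeleton---direct method on the reduced energy for finite $R$, symmetrization for the sign of $u$, then compactness and a limit $R\to\infty$---matches the paper. However, there are two substantive gaps where your proposed mechanism is not the one that actually works.

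\textbf{The sign of $v$.} Your ``reflection-truncation competitor on any subinterval where $v\ge0$'' is not well-defined: unlike the odd symmetry $f(-u,v)=-f(u,v)$ that you correctly exploit for $u$, the bulk density has no reflection symmetry in $v$, and truncating $v$ to $0$ does not obviously lower $\mathfrak{f}_S$. The paper instead minimizes from the start over the constrained set $\mathcal{T}_-=\{v\le0\}$, and then proves that the minimizer does not touch $\{v=0\}$ by showing that for $\alpha>7.5$ there is a small $\delta<0$ with $\partial\mathcal{E}/\partial v>0$ whenever $v\in[\delta,0]$; this uses the explicit formula $g=\frac{a+b}{2ab}v+\frac{a-b}{6ab}u$ coming from the Bingham structure. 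Strictness $v<0$ on all of $(0,R)$ is then obtained not by the strong maximum principle on the $v$-equation, but by a maximum principle applied to $w=v/u+1/3$, which yields the stronger $3v+u<0$.

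\textbf{Monotonicity.} This is, as you say, the crux, and the rearrangement route does not seem to close: because the bulk term couples $u$ and $v$ through $(f,g)$, rearranging one variable alone changes the bulk integrand, and there is no evident joint rearrangement that controls it. The paper's argument is entirely different and does not use the global minimizing property at all. It uses only the \emph{local} minimizer inequality $\mathcal{J}(\mu,\nu)\ge0$ in \eqref{def:min-1d} with the test pair $\mu=u'\chi$, $\nu=v'\eta$, where $\chi=\mathbf{1}_{\{uu'<0\}}$ and $\eta=\mathbf{1}_{\{cv'/(u(de-c^2))<0\}}$. After substituting the differentiated system for $(u',v')$ and integrating by parts (checking boundary terms carefully), the resulting inequality is forced to be strictly negative unless both ``bad'' sets are empty, a contradiction. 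This step crucially relies on the Bingham identities $f_u,f_v,g_u,g_v$ expressed via $c,d,e$, and in particular on $de-c^2>0$ and $c<0$ when $f>0$---facts proved separately from the closure structure. Your alternative suggestion of applying Sturm/maximum-principle arguments to the differentiated system runs into the same coupling: the equations for $u'$ and $v'$ each contain the other with a sign that is not a priori known, so a direct maximum principle does not apply.

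\textbf{The case $R=\infty$.} Monotonicity and the ODE give that $(u_\infty(\infty),v_\infty(\infty))$ is a critical point of the bulk, but there are two candidates: $(s_2/2,-s_2/6)$ and $(0,0)$. Ruling out the degenerate one requires an additional argument; the paper does this by testing the second variation $I_{0,01}$ against a scaled bump supported far out and using $\alpha>7.5$ to make it negative, contradicting local minimality. Your proposal does not address this step.
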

\begin{Remark}
Indeed, one can prove for all minimizers of  $\mathcal{E}(u,v)$ in (\ref{energyuv}), it hold that $u'>0,v'<0$ for all $r\in(0,R)$, see Proposition \ref{prop:monotonity}.
\end{Remark}


\par
For the question regarding to the stability of degree-$k/2$ radial symmetric point defects constructed above,
we show that they are stable for $|k|=1$ and are unstable for $|k|>1$. This is summarized in the following two theorems.
\begin{theorem}\label{thm:stability}
Let $(u,v)$ be the solution obtained in Theorem \ref{thm:existence}. When ${k=\pm 1}$, then the solution $Q=u(r)F_1+v(r)F_2$ is a local minimizer of the energy (\ref{energy}).
That is, for any perturbation $V\in H_0^1(\mathbb{B}_R, \mathbb{Q})$, it holds
\begin{align*}
    I(V) =\int_{B_R}\Big\{ |\nabla V|^2-\alpha|V|^2 +\frac{\delta B}{\delta Q}(V):V \Big\} dx\ge 0.
\end{align*}
Moreover, the equality holds if and only if
\begin{align}\label{kernel}
V=&\mu_0F_1+\nu_0 F_2
\end{align}
for some $(\mu_0,\nu_0)$ satisfying $\mathcal{J}(\mu_0,\nu_0)=0$.
\end{theorem}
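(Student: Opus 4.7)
The plan is to diagonalize the second variation $I(V)$ by passing to a $\varphi$-dependent orthonormal frame of $\mathcal{Q}$ adapted to the defect, Fourier-expanding in the angular variable, and verifying non-negativity of the resulting one-dimensional quadratic forms mode-by-mode.

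\emph{Step 1 (adapted frame and block decomposition).} Extend $\{F_1(\varphi)/|F_1(\varphi)|,\, F_2/|F_2|\}$ to a $\varphi$-dependent orthonormal basis $\{E_i(\varphi)\}_{i=1}^5$ of $\mathcal{Q}$, choosing $E_3(\varphi), E_4(\varphi), E_5(\varphi)$ to have definite rotational weight under rotations about $\mathbf{e}_3$ (namely $\pm k/2$ or $\pm k$). Using that $B_Q$ and $Q$ share eigenvectors (Proposition \ref{prop:bingham}), the linearization $\frac{\delta B}{\delta Q}$ at $Q = uF_1 + vF_2$ leaves $\mathrm{span}(E_1, E_2)$ and its orthogonal complement invariant. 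Writing $V = V^\parallel + V^\perp$ accordingly, a pointwise check in $(r, \varphi)$ gives $I(V) = I^\parallel(V^\parallel) + I^\perp(V^\perp)$.

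\emph{Step 2 (in-plane non-negativity).} Let $V^\parallel = \mu(r, \varphi) F_1(\varphi) + \nu(r, \varphi) F_2$. Expanding $|\nabla V^\parallel|^2$ in polar coordinates and computing the potential contribution using the in-plane block of $\frac{\delta B}{\delta Q}$ derived in Section 4, one finds
\begin{equation*}
I^\parallel = \int_0^{2\pi} \mathcal{J}\bigl(\mu(\cdot,\varphi), \nu(\cdot,\varphi)\bigr) \, d\varphi + \int_0^{2\pi}\!\!\int_0^R \frac{2(\partial_\varphi \mu)^2 + 6(\partial_\varphi \nu)^2}{r^2}\, r \, dr \, d\varphi.
\end{equation*}
The first summand is non-negative by the 1D local-minimality of $(u,v)$ from Theorem \ref{thm:existence}, and the second is manifestly non-negative.

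\emph{Step 3 (out-of-plane non-negativity when $|k|=1$).} Expand each out-of-plane coefficient $V^\perp_j(r, \varphi)$ in a Fourier series in $\varphi$. The angular derivative acting on the $\varphi$-rotating basis $E_j(\varphi)$ shifts the mode index by its rotational weight, so each Fourier mode $n$ contributes a 1D quadratic form
\begin{equation*}
\int_0^R \Bigl[ (\partial_r W_n)^2 + \frac{(n \pm k/2)^2}{r^2} W_n^2 + m_{\perp,j}(r) W_n^2 \Bigr] r \, dr,
\end{equation*}
where $m_{\perp,j}(r)$ assembles the $\frac{\delta B}{\delta Q}$ and $-\alpha$ terms in the $j$-th out-of-plane direction. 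When $|k| = 1$, $(n \pm 1/2)^2 \geq 1/4$ for every $n \in \mathbb{Z}$; combining this angular coercivity with a Hardy-type inequality on $(0, R)$, the explicit identities for $\frac{\delta B}{\delta Q}$ from Section 4, and the sign and monotonicity properties $u, -v, u', -v' > 0$ from Theorem \ref{thm:existence}, each modal quadratic form is shown to be non-negative.

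\emph{Step 4 (equality case and main obstacle).} If $I(V) = 0$, then $I^\perp = 0$ together with the strict positivity of the angular penalty for $|k|=1$ forces $V^\perp \equiv 0$. In the in-plane block, vanishing of the angular-derivative integral in Step 2 forces $\mu, \nu$ to be $\varphi$-independent, and then $\mathcal{J}(\mu_0, \nu_0) = 0$, producing exactly the characterization (\ref{kernel}). The heart of the proof is Step 3: since $\frac{\delta B}{\delta Q}$ is only defined implicitly via the Bingham relation (\ref{rebq}), closing the Hardy-type bound uniformly in the Fourier mode $n$ requires exploiting the structural identities of Section 4 together with the profile information of Theorem \ref{thm:existence}. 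The threshold $|k|=1$ is sharp, since for $|k| > 1$ the minimum of $(n \pm k/2)^2$ over integers can drop below $1/4$ (vanishing entirely when $k$ is even), which is precisely the mechanism behind the instability for $|k| > 1$.
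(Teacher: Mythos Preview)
Your block decomposition in Step 1 is incorrect, and this breaks the rest of the argument. While it is true that $\frac{\delta B}{\delta Q}$ leaves $\mathrm{span}(F_1,F_2)$ invariant, the Dirichlet form $\int|\nabla V|^2$ does \emph{not}: since $\partial_\varphi F_1(\varphi)$ is orthogonal to both $F_1(\varphi)$ and $F_2$, the angular derivative $\partial_\varphi V^\parallel$ acquires a component in the orthogonal complement, producing a nonzero cross term $\frac{2}{r^2}\,\partial_\varphi V^\parallel:\partial_\varphi V^\perp$ that does not integrate to zero in $\varphi$. The correct split (the one the paper uses) is the $3+2$ decomposition $\mathrm{span}(E_0,E_1,E_2)$ versus $\mathrm{span}(E_3,E_4)$: both $\frac{\delta B}{\delta Q}$ and $\partial_\varphi$ preserve these two subspaces, so $I(V)=I^A+I^B$ genuinely decouples. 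Consequently the direction you place in $V^\perp$ corresponding to the paper's $E_2$ actually belongs with $F_1,F_2$, and for each Fourier mode $n\ge 1$ you face a coupled $3\times 3$ quadratic form in the $(E_0,E_1,E_2)$ coefficients that is \emph{not} controlled by the 1D minimality inequality $\mathcal{J}\ge 0$ alone.

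Your Step 3 is also too schematic to close. The lower bound $(n\pm 1/2)^2\ge 1/4$ applies only to the genuinely half-integer-weight directions ($E_3,E_4$), not to the $E_2$ direction, which carries integer weight; and in any case a Hardy inequality by itself does not suffice. The paper's mechanism is different and quite specific: one substitutes $q_1=u\eta$, $q_0=v\zeta$ (for $I^B$) and $\alpha_0=v'\xi$, $\alpha_1=u'\eta$, $\alpha_2=\tfrac{u}{r}\zeta$ (for the $n\ge 1$ modes of $I^A$), and then uses the integral identities $\mathcal{A}$--$\mathcal{G}$ of Section~4---which encode the linearized profile equations---to rewrite each modal form as a sum of manifestly non-negative terms such as $(u\eta')^2$, $(v'\xi')^2$, $\frac{2uu'}{r^3}(\eta-\zeta)^2$, and $\frac{\sqrt{3}cu'v'}{de-c^2}(\eta-\xi/\sqrt3)^2$. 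The signs $u,u',-v,-v'>0$ and $c<0$ enter precisely here to make these squares non-negative. Without this factorization the modal forms are indefinite, so the appeal to ``Hardy-type inequality plus sign/monotonicity'' does not constitute a proof.
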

\begin{theorem}\label{thm:instability}
When ${|k|>1}$ {and} ${R=\infty}$, the solution we construct in Theorem \ref{thm:existence} is unstable in the sense
that there exists $V\in H^1(\BR,\Qa)$ such that the second variation $I(V)<0$.
\end{theorem}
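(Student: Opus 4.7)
The plan is to construct a compactly supported test perturbation $V \in H^1(\BR, \Qa)$ with $I(V) < 0$, adapting to the singular-potential setting the approach of Ignat--Nguyen--Slastikov--Zarnescu for the Landau--de Gennes analog. By the $SO(2)$-equivariance of the problem, one can Fourier decompose $V(r,\varphi) = \sum_{m\in\mathbb{Z}} V_m(r) e^{im\varphi}$, under which the second variation $I(V)$ block-diagonalizes (up to couplings $m \leftrightarrow m\pm k$ induced by the profile modes of $F_1(\varphi)$). The $m=0$ block is essentially the reduced 1D functional $\mathcal{J}$ of \eqref{def:min-1d}, which is nonnegative by Theorem \ref{thm:existence}; any instability must therefore come from a non-radial mode.

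The candidate test function is
\begin{equation*}
V(r,\varphi) = \eta(r)W(\varphi),
\end{equation*}
with $\eta \in C_c^\infty((0,\infty))$ and $W$ single-valued and sampling a nontrivial angular mode. A natural ansatz is the infinitesimal out-of-plane rotation of $F_1$ about the $\mathbf{e}_1$-axis,
\begin{equation*}
W(\varphi) = \sin(k\varphi)\,E_{13} - \cos(k\varphi)\,E_{23}, \qquad E_{ij} = \mathbf{e}_i\otimes\mathbf{e}_j+\mathbf{e}_j\otimes\mathbf{e}_i,
\end{equation*}
which is single-valued for every integer $k$ and tangent to $\mathcal{N}$ at the asymptotic value $Q_\infty$, so the bulk potential vanishes at infinity. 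The rotation covariance $B_{RQR^T} = RB_QR^T$ of the Bingham closure implies that $\frac{\delta B}{\delta Q}$ is diagonal in the principal frame $(\mathbf{n}(\varphi),\mathbf{n}^\perp(\varphi),\mathbf{e}_3)$ of $Q(r,\varphi)$, with the off-diagonal block acting pointwise by
\begin{equation*}
\lambda_1(r) = \frac{3g(r)-f(r)}{3v(r)-u(r)}, \qquad \lambda_2(r) = \frac{3g(r)+f(r)}{3v(r)+u(r)}.
\end{equation*}
Writing $W$ in this frame and integrating out $\varphi$ reduces $I(V)$ to a 1D quadratic form in $\eta$ with coefficients built from $u, v, f, g, \alpha, \lambda_1, \lambda_2$.

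The core step, and the main obstacle, is the sign analysis of this reduced 1D form. Using the ODE system \eqref{ODE:u}--\eqref{ODE:v} together with the algebraic identities derived later in the paper, I would integrate by parts to rewrite the reduced integrand so that it exhibits a strictly negative term proportional to $(k^2-1)\,u(r)^2/r^2$, which vanishes exactly when $|k|=1$ (consistent with the stability in Theorem \ref{thm:stability}). A logarithmic rescaling $\eta(r) = \zeta(\log r/\log L)$ with fixed $\zeta \in C_c^\infty(\mathbb{R})$ and $L\to\infty$ would then produce a gain of order $\log L$ from the $(k^2-1)$ term, outweighing the bounded Dirichlet and corrector contributions and yielding $I(V) < 0$. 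Two delicate points need care: first, $\lambda_2 = (3g+f)/(3v+u)$ takes the indeterminate form $0/0$ at $Q_\infty$ because of the eigenvalue degeneracy of $Q_\infty$, so its correct limit $\alpha + \Lambda_\infty$ must be extracted via a second-order Taylor expansion of the Bingham map around $Q_\infty$; second, since $\lambda_1+\lambda_2-2\alpha$ is pointwise positive both at the origin (from $v''(0)<0$) and at $r=\infty$ (from the biaxial Hessian eigenvalue), the negative contribution cannot be obtained by a pointwise sign comparison and must instead be harvested through a global integration-by-parts identity that exploits the specific form of the profile equations.
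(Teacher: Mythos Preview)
Your overall strategy --- perturb in the out-of-plane block $\mathrm{span}\{E_3,E_4\}$, reduce to a 1D quadratic form via the profile ODEs, and exploit a $(k^2-1)/r^2$ gain through logarithmic rescaling --- is exactly the route the paper takes. The gap is in your concrete ansatz.

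Your $W(\varphi)=\sin(k\varphi)E_{13}-\cos(k\varphi)E_{23}$ is the infinitesimal rotation of $F_1$ about $\ee_1$, \emph{not} of $Q_\infty=\tfrac{s_2}{2}F_1-\tfrac{s_2}{6}F_2$; the $F_2$ contribution adds an extra $-3vE_{23}$ term. In the principal frame $(\tilde E_3,\tilde E_4)$ of $Q$, your $W$ decomposes as $\sin(k\varphi/2)\tilde E_3+\cos(k\varphi/2)\tilde E_4$, so it has a nontrivial $\tilde E_4$ component and is \emph{not} tangent to $\mathcal{N}$ at $Q_\infty$. Writing $z=w_3+iw_4=-\sqrt{2}\,i\,\eta(r)e^{ik\varphi}$, your $V$ is a \emph{single} Fourier mode $m=k$; the coupling term $\mathrm{Re}(e^{-ik\varphi}z^2)$ integrates to zero in $\varphi$, and the reduced functional is
\[
\frac{I^B}{4\pi}=\int_0^\infty\Big\{(\eta')^2+\frac{k^2}{r^2}\eta^2+\Big(\frac{a+b}{2ab}-\alpha\Big)\eta^2\Big\}r\,dr.
\]
At infinity $\frac{a+b}{2ab}-\alpha\to\frac{a-b}{2ab}>0$ (this is precisely your observation that $\lambda_1+\lambda_2-2\alpha>0$), so the integrand is strictly positive for large $r$ and no scaling can make $I^B<0$. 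The proposed ``global integration-by-parts'' rescue does not work here because a single radial amplitude offers no mechanism to cancel this leftover bulk term.

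What the paper actually does is use a \emph{two}-component perturbation: $q_1=u\eta$ carries the angular mode that produces the $1/r^2$ weight, while $q_0=-3v\eta$ is the companion mode. Applying the identities $\mathcal{A}(\eta)$ and $\mathcal{B}(\eta)$ (built from the profile ODEs) to these two pieces leaves residual bulk terms $-\tfrac{3(a-b)uv}{2ab}\eta^2$ from each; the cross-coupling term $-\tfrac{a-b}{ab}q_0q_1=\tfrac{3(a-b)uv}{ab}\eta^2$ cancels them exactly. What remains is
\[
\int_0^\infty\Big\{(u^2+9v^2)(\eta')^2-\frac{k^2-1}{r^2}(u\eta)^2\Big\}r\,dr,
\]
the $(k^2-1)$ arising because the $u$-ODE carries $k^2/r^2$ while the chosen angular mode contributes only $1/r^2$. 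The specific ratio $q_0:q_1=-3v:u$ is not accidental: it is what forces the perturbation, at infinity where $-3v=u=s_2/2$, to lie along the tangent direction $\tilde E_3$ and hence see bulk eigenvalue $1/a=\alpha$ rather than the average $(1/a+1/b)/2$. Your single-mode $W$ has no such coupling and therefore cannot achieve this cancellation; that is the missing idea.
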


Theorem \ref{thm:stability} and Theorem \ref{thm:instability} indicate that 2-D radial symmetric solutions in singular energy share the same stability/instability properties
with the corresponding solutions in the classical Landau-de Gennes energy.  However, Remark 1.1 tells us that the eigenvalue $v$ is always decreasing in singular energy case.
This is different with the case of Landau-de Gennes energy, in which one have $v'<0$ for $\bar{b}^2>3\bar{a}\bar{c}$,
$v'\equiv0$ for $\bar{b}^2=3\bar{a}\bar{c}$ and $v'>0$ for $\bar{b}^2<3\bar{a}\bar{c}$.
This may give a new example which indicates that the Landau-de Gennes energy would give non-physical predictions on qualitative properties of order parameters for low temperature cases $\bar{b}^2<3\bar{a}\bar{c}$.

The main frameworks of proofs for Theorem \ref{thm:existence}-\ref{thm:instability} are similar
to those  in \cite{GWZZ, INSZ2, INSZ3} for the case of polynomial Landau-de Gennes energy. However, some new difficulties arise here. One of main difficulties is
that the relation between $(f,g)$ and $(u,v)$ are not apparent.
For this, we establish some key identities which are not only useful to prove the existences of solutions
but also important to study the monotonicity of $(u,v)$ and stabilities of profile solutions.
The proof of these identities involves the rotational gradient operator on the unit sphere and
suitably choosing of  vector fields to apply the integration by parts. On the other hand, to study the monotonicity of the solution,
some important inequalities need to be established.
We believe that these identities and inequalities may have independent interests and would be useful in studying other
related problems on $Q$-tensor models with Ball-Majumdar's singular energy.

\section{Preliminary analysis for the singular potential and the Bingham closure}
In this section, we present some results on the singular potential and Bingham closure. These results will play important roles in next sections.

\subsection{Existence and uniqueness of the Bingham map}
We recall some results on the Bingham closure. These results are already known in literatures.
\begin{proposition}\cite{BM, LWZ}\label{prop:bingham}
Bingham closure has the following properties:
\begin{enumerate}
  \item (Existence and uniqueness of $B_Q$). For any given $Q\in \Qp$, there exists a unique $B_Q\in\Qa$ such that the following relation holds:
  \beq\label{relation:B-Q}
\frac{\int_{\mathbb{S}^2}(\mm\mm-\frac{1}{3}I)\exp(B_Q:\mm\mm)d\mm}{\int_{\mathbb{S}^2}\exp(B_Q:\mm\mm)d\mm}=Q.
\eeq
Moreover, $B_Q$ also satisfies
      \beq\label{lag}
      B_Q:Q-w(B_Q)=\sup\limits_{B\in \Qa}(B:Q-w(B)),\qquad w(B):=\ln{\int_{\mathbb{S}^2}\exp(B:\mm\mm)d\mm}.
      \eeq
  \item Q and $B_Q$ are simultaneously diagonalizable, i.e., they share the same eigenvectors.
  \item For any $\delta>0$, there exists an positive constant $\Lambda=\Lambda(\delta)$ such that, when  all eigenvalues of $Q$ belong to $[-\frac{1}{3}+\delta,\f{2}{3}-\delta]$, then all the eigenvalues of $B_Q$ belong to $[-\Lambda(\delta),\Lambda(\delta)]$.
  \item Denote the maps between $\Qp$ and $\Qa$ as $B=B(Q): \Qp\rightarrow\Qa$ and $Q=Q(B): \Qa\rightarrow\Qp$. The Jacobian matrix $\nabla_B Q(B)$ is positive definite for any $B\in\Qa$. Consequently, $B(Q)$ is a smooth map from $\Qp$ to $\Qa$.
\end{enumerate}
\end{proposition}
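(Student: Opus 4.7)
The plan is to exploit convex duality for the log-partition function $w(B):=\ln\int_{\BS}\exp(B:\mm\mm)\,d\mm$ defined on $\Qa$. A direct differentiation gives
\[
\nabla_B w(B)=\int_{\BS}\Big(\mm\mm-\tfrac{1}{3}I\Big)\rho_B(\mm)\,d\mm,\qquad \rho_B=\frac{e^{B:\mm\mm}}{Z_B},
\]
where the $-\tfrac{1}{3}I$ appears because $B\in\Qa$ is traceless, so the relation (\ref{relation:B-Q}) is exactly the first-order condition $\nabla_B w(B_Q)=Q$. Differentiating once more yields $\nabla_B^2 w(B)[\tilde B,\tilde B]=\mathrm{Var}_{\rho_B}(\tilde B:\mm\mm)$ for any $\tilde B\in\Qa$. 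Since for nonzero $\tilde B\in\Qa$ the function $\mm\mapsto \tilde B:\mm\mm$ is non-constant on $\BS$ (because the convex hull of $\{\mm\mm:\mm\in\BS\}$ has nonempty interior inside the affine hyperplane of trace-one symmetric matrices), this variance is strictly positive; hence $w$ is strictly convex on $\Qa$.

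For Part 1 I would couple strict convexity with a coercivity estimate. Diagonalising $B$ with ordered eigenvalues $\lambda_1\ge\lambda_2\ge\lambda_3$, $\sum\lambda_i=0$, Laplace-type asymptotics give $w(B)=\lambda_1+O(\log(1+|B|))$ as $|B|\to\infty$, while $B:Q=\sum\lambda_i q_i$ for the eigenvalues $q_i\in(-\tfrac{1}{3},\tfrac{2}{3})$ of $Q$; the strict gap $\tfrac{2}{3}-\max_i q_i>0$ then forces $B:Q-w(B)\to -\infty$. The strictly concave functional $B\mapsto B:Q-w(B)$ therefore admits a unique maximiser $B_Q$, which establishes both the variational identity (\ref{lag}) and, via the first-order condition, the defining relation (\ref{relation:B-Q}). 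Part 4 is then immediate: $B\mapsto Q(B)=\nabla_B w(B)$ has derivative $\nabla_B^2 w(B)$, which is positive definite by the variance computation above, so the inverse function theorem yields the smoothness of $Q\mapsto B_Q$.

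For Part 2 I would exploit rotational covariance. For any orthogonal $R$, the change of variable $\mm\to R\mm$ on $\BS$ shows that $RB_Q R^T$ satisfies the Bingham relation with $RQR^T$ in place of $Q$, so uniqueness gives $B_{RQR^T}=RB_Q R^T$. Choosing $R$ in the stabiliser of $Q$ (those rotations preserving its eigendecomposition) forces $RB_Q R^T=B_Q$, which means $B_Q$ commutes with the same rotations and therefore shares the eigenvectors of $Q$. Part 3 follows from a quantitative version of the coercivity above: if the eigenvalues of $Q$ lie in $[-\tfrac{1}{3}+\delta,\tfrac{2}{3}-\delta]$, then the rate at which $B:Q-w(B)\to -\infty$ is uniform in $Q$, so the maximiser $B_Q$ satisfies $|B_Q|\le\Lambda(\delta)$ for a constant depending only on $\delta$; alternatively, continuity of $B(\cdot)$ (granted by Part 4) applied to the compact set $K_\delta:=\{Q\in\Qp:\lambda_i(Q)\in[-\tfrac{1}{3}+\delta,\tfrac{2}{3}-\delta]\}$ already yields compactness of $B(K_\delta)$.

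The principal technical obstacle is the coercivity step in Part 1: matching Laplace asymptotics $w(B)\sim\lambda_1$ against the linear term $B:Q$ uniformly as $B$ diverges in $\Qa$, and keeping the error terms under control uniformly in the direction of escape. Once this is in place, the remaining claims are routine consequences of strict convexity, rotation invariance, and the inverse function theorem.
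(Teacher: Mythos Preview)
The paper does not supply its own proof of this proposition; it is stated with citations to \cite{BM, LWZ} and introduced explicitly as a recollection of ``results already known in literatures.'' Your convex-duality approach --- treating $w$ as a strictly convex log-partition function on $\Qa$ and obtaining $B_Q$ as the unique maximiser of the concave functional $B\mapsto B:Q-w(B)$ --- is precisely the route taken in those references, so there is nothing in the present paper to compare against beyond confirming that your outline matches the standard argument.

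Your sketch is essentially correct, with one imprecision worth flagging in the coercivity step for Part~1. You write $B:Q=\sum_i\lambda_i q_i$, but this identity holds only when $B$ and $Q$ are simultaneously diagonalizable, which you have not yet established (Part~2 comes later). What you actually need is the one-sided bound $B:Q\le\sum_i\lambda_i^{\downarrow}q_i^{\downarrow}$ furnished by von~Neumann's trace inequality. Combining this with your Laplace asymptotic $w(B)=\lambda_1+O(\log(1+|B|))$ and the algebraic identity
\[
\sum_i\lambda_i^{\downarrow}q_i^{\downarrow}-\lambda_1
=\sum_i(\lambda_i-\lambda_1)\Big(q_i+\tfrac13\Big)
\le (\lambda_3-\lambda_1)\Big(q_3+\tfrac13\Big)
\le -\tfrac{3}{2}\lambda_1\Big(q_3+\tfrac13\Big),
\]
the strict gap $q_3+\tfrac13>0$ drives $B:Q-w(B)\to-\infty$ as claimed, uniformly once the eigenvalues of $Q$ are confined to $[-\tfrac13+\delta,\tfrac23-\delta]$ (which also gives your quantitative Part~3). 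With this correction in place, your arguments for Parts~2 and~4 via rotational covariance and the inverse function theorem go through without change.
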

From this proposition, we know that, for any $u\in(-1/2, 1/2), v\in(-1/6, 1/3)$ there exist a unique pair of $(f, g)$ such that for  $F_1, F_2$ defined in (\ref{def:F}),
\begin{align*}
\int_{\mathbb{S}^2}(\mm\mm-\frac{1}{3}I)\frac{\exp((fF_1+gF_2):\mm\mm)}{\int_{\mathbb{S}^2}\exp((fF_1+gF_2):\mm\mm)d\mm}d\mm=u F_1+v F_2,
\end{align*}
 or equivalently,
\begin{align}\label{relation:fg-u}
\frac{\int_{\mathbb{S}^2}(m_1^2-m_2^2)\exp({f(m_1^2-m_2^2)+g(2m_3^2-m_1^2-m_2^2)})
d\mm}{\int_{\BS}\exp({f(m_1^2-m_2^2)+g(2m_3^2-m_1^2-m_2^2)})d\mm}=2u,\\\label{relation:fg-v}
\frac{\int_{\mathbb{S}^2}(2m_3^2-m_1^2-m_2^2)\exp({f(m_1^2-m_2^2)+g(2m_3^2-m_1^2
-m_2^2)})d\mm}{\int_{\BS}\exp({f(m_1^2-m_2^2)+g(2m_3^2-m_1^2-m_2^2)})d\mm}=6v.
\end{align}
For given functions $f$ and $g$, we introduce the following notations:
\begin{align}\label{def:rho}
\rho_{f,g}(\mm)=\frac{\exp({f(m_1^2-m_2^2)+g(2m_3^2-m_1^2-m_2^2)})}{\int_{\mathbb{S}^2}\exp({f(m_1^2-m_2^2)+g(2m_3^2-m_1^2-m_2^2)})d\mm},
\end{align}
and define
\begin{align}
\langle h(\mm)\rangle_{f,g}=\int_{\BS}h(\mm)\rho_{f,g}(\mm)d\mm.
\end{align}
It will be simply written as $\langle h(\mm)\rangle$ when no confusion is caused.
Clearly, it holds that $\langle m_1^2-m_2^2\rangle=2u,\langle 2m_3^2-m_1^2-m_2^2\rangle=6v. $
It is not difficult to verify that if $u\to1/2(-1/2)$ for fixed $v$, then $f\to \infty(-\infty)$.

\subsection{Some identities and inequalities related to the Bingham closure}

Let $f, g, u$ and  $v$ be real numbers related by (\ref{relation:fg-u})-(\ref{relation:fg-v}) and $\rho=\rho_{f,g}$ be a probability distribution function on $\BS$ defined (\ref{def:rho}).
We introduce
\begin{align}\label{def:abh}
a=&2\langle m_1^2m_3^2\rangle;\qquad b=2\langle m_2^2m_3^2\rangle;\qquad h=2\langle m_1^2m_2^2\rangle;\\\label{def:c}
c=&\frac{\sqrt{3}}{2}\left(\left\langle\left(m_3^2-\frac{1}{3}\right)(m_1^2-m_2^2)\right\rangle
-\left\langle m_3^2-\frac{1}{3}\right\rangle\langle m_1^2-m_2^2\rangle\right);\\\label{def:d}
d=&\frac{3}{2}\left(\left\langle\left(m_3^2-\frac{1}{3}\right)^2\right\rangle-\left\langle m_3^2-\frac{1}{3}\right\rangle^2\right);\\\label{def:e}
e=&\frac{1}{2}\left(\left\langle(m_1^2-m_2^2)^2\right\rangle-\langle m_1^2-m_2^2\rangle^2\right).
\end{align}
The following lemmas give some basic algebraic relations, which will play crucial rules in
the proofs of Theorem \ref{thm:existence}-\ref{thm:instability}. These relations may also have independent
interests and we believe that they will be useful when studying some other problems on the singular bulk energy $\mathfrak{f}_S$.
\begin{lemma}
For the quantities defined above, the following relationship holds
\begin{align}\label{relation:fguv}
f&=\frac{a+b}{2ab}u+\f{3(a-b)}{2ab}v,\quad g=\f{a+b}{2ab}v+\f{a-b}{6ab}u,\\
\label{relation:uf}
u&=fh.
\end{align}
\end{lemma}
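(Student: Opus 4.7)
The plan is to derive both identities from a single mechanism: integration by parts on $\mathbb{S}^2$ against the Bingham density $\rho_{f,g}$, using the rotational vector fields $L_i=\epsilon_{ijk}m_j\partial_{m_k}$ (the generators of $SO(3)$), which are tangent to the sphere and therefore satisfy the skew-symmetry identity $\int_{\BS}(L_i F)\,G\,d\mm=-\int_{\BS}F\,(L_i G)\,d\mm$. The key observation is that, writing $\Phi:=f(m_1^2-m_2^2)+g(2m_3^2-m_1^2-m_2^2)$, the functions $L_i\Phi$ are pure degree-$2$ monomials in $m_j m_k$ (no $m_i^2$ terms), so applying $L_i$ to $e^\Phi$ produces quantities that pair cleanly with $m_jm_k$ under a second integration by parts.

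First I would prove $u=fh$. A direct computation gives $L_3\Phi=-4f\,m_1m_2$ and $L_3(m_1m_2)=m_1^2-m_2^2$. Integrating by parts,
\[
\int_{\BS}(m_1^2-m_2^2)e^{\Phi}\,d\mm=\int_{\BS}L_3(m_1m_2)\cdot e^{\Phi}\,d\mm=-\int_{\BS}m_1m_2\cdot L_3 e^{\Phi}\,d\mm=4f\int_{\BS}m_1^2m_2^2 e^{\Phi}\,d\mm.
\]
Dividing through by the partition function gives $2u=4f\langle m_1^2m_2^2\rangle=2fh$, which is the second identity.

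Next I would repeat the trick with $L_1$ and $L_2$. A parallel calculation gives $L_1\Phi=(2f+6g)m_2m_3$ and $L_1(m_2m_3)=m_2^2-m_3^2$, yielding $\langle m_2^2-m_3^2\rangle=-(f+3g)b$. Likewise, $L_2\Phi=(2f-6g)m_1m_3$ and $L_2(m_1m_3)=m_3^2-m_1^2$, yielding $\langle m_3^2-m_1^2\rangle=-(f-3g)a$. Taking the sum and the difference of these two relations and using $\langle m_1^2-m_2^2\rangle=2u$ and $\langle 2m_3^2-m_1^2-m_2^2\rangle=6v$ produces the $2\times 2$ linear system
\[
f(a+b)-3g(a-b)=2u,\qquad -f(a-b)+3g(a+b)=6v,
\]
whose determinant equals $4ab$. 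Solving (a short elimination) yields exactly the claimed formulas for $f$ and $g$.

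The only mildly delicate step is the choice of the three auxiliary fields $m_1m_2$, $m_2m_3$, $m_1m_3$; once one notices that rotation generators map quadratic diagonal forms to off-diagonal quadratic monomials and vice versa, the identities are essentially forced. Positivity of the moments $a,b,h$ (which follows from $\rho_{f,g}>0$) ensures the algebraic manipulations are well defined; no convexity or uniqueness input from Proposition \ref{prop:bingham} is needed beyond the fact that $f,g$ are the parameters dual to $u,v$ in the Bingham representation.
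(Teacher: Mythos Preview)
Your proof is correct and uses essentially the same mechanism as the paper: integration by parts on $\mathbb{S}^2$ with the rotational derivatives $L_i$ (which coincide with the components $\mathcal{R}_i$ of the paper's rotational gradient $\mathcal{R}=\mm\times\nabla_\mm$). The derivation of $u=fh$ via $L_3$ is identical in spirit to the paper's use of $\mathcal{R}\cdot(0,0,m_1m_2)$, and your $L_1$/$L_2$ identities combine to give exactly the same $2\times2$ linear system the paper obtains.

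The one genuine organizational difference is that the paper derives only one of the two linear relations (namely $2u=f(a+b)-3g(a-b)$) by direct rotational integration by parts, and obtains the second relation $3v=\tfrac{3}{2}g(a+b)-\tfrac{1}{2}f(a-b)$ by quoting a fourth-moment identity from \cite{HLW} and contracting with $\ee_3\otimes\ee_3$. Your approach instead extracts both relations uniformly from the pair $L_1,L_2$, taking the sum and difference of the two scalar identities. This is arguably cleaner and more self-contained, since no external structural lemma is needed; on the other hand the paper's route exhibits the connection to the general moment identity $\tfrac{3}{2}Q=B_QQ+\tfrac13B_Q-\langle\mm\mm\mm\mm\rangle:B_Q$, which has independent interest.

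One minor arithmetic slip: the determinant of your $2\times2$ system is $3(a+b)^2-3(a-b)^2=12ab$, not $4ab$; the final formulas for $f$ and $g$ are unaffected.
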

\begin{proof}
By Lemma 4.1 in \cite{HLW}, we have
\begin{align}
\frac{3}{2}Q=B_QQ+\frac13B_Q-\langle \mm\mm\mm\mm\rangle_{\rho_Q}:B_Q.
\end{align}
Taking $Q=diag\{u-v, -u-v, 2v\}$ and $B_Q=\{f-g, -f-g, 2g\}$ in the above identity with (\ref{relation:fg-u})-(\ref{relation:fg-v}) are satisfied,
and contracting with $\ee_3\otimes \ee_3$, we have
\begin{align}\label{Id:4-1}
3v=&4gv+\frac23g-\int_\BS m_3^2\left((2m_3^2-m_1^2-m_2^2)g+(m_1^2-m_2^2)f\right)\rho_{f,g} d\mm\nonumber\\
=&4gv+\frac23g-2g(2v+\frac13)+\frac32g(a+b)-\frac12f(a-b)\nonumber\\
=&\frac32g(a+b)-\frac12f(a-b).
\end{align}
On the other hand, by recalling the rotational gradient operator $\CR$ on unit sphere(see Page 1339 in \cite{WZZ1} or Appendix), we have for $B=\text{diag}\{f-g, -f-g, 2g\}$ that
\begin{align*}
\int_\BS (m_2^2-m_1^2) \exp(B:\mm\mm)d\mm=&\int_\BS \CR\cdot(m_2m_3, m_1m_3, 0) \exp(B:\mm\mm)d\mm\\
=&-\int_\BS (m_2m_3, m_1m_3, 0)\cdot\CR \exp(B:\mm\mm)d\mm\\
=&-2\int_\BS (m_2m_3, m_1m_3, 0)\cdot\Big(\mm\times (B\cdot \mm)\Big) \exp(B:\mm\mm)d\mm\\
=&-2\int_\BS \Big((f+3g)m_2^2m_3^2+(f-3g)m_1^2m_3^2\Big)\exp(B:\mm\mm)d\mm.
\end{align*}
Thus, we get
\begin{align}\label{Id:4-2}
2u=f(a+b)-3g(a-b);
\end{align}
Combining (\ref{Id:4-1}) and (\ref{Id:4-2}), we obtain (\ref{relation:fguv}).
\par
Similarly, we also have
\begin{align*}
2u=&\int_\BS (m_1^2-m_2^2)\exp(B:\mm\mm)d\mm\\
=&\int_\BS \CR\cdot(0, 0, m_1m_2) \exp(B:\mm\mm)d\mm\\
=&-\int_\BS (0, 0, m_1m_2)\cdot\CR \exp(B:\mm\mm)d\mm\\
=&-2\int_\BS (0, 0, m_1m_2)\cdot\Big(m\times (B\cdot m)\Big) \exp(B:\mm\mm)d\mm\\
=&4f\int_\BS m_1^2m_2^2\exp(B:\mm\mm)d\mm,
\end{align*}
which implies (\ref{relation:uf}).
\end{proof}
\begin{corol}\label{cor:ineq-f}
If $u>0$, then $f>0$.
\end{corol}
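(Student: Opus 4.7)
The plan is to derive the corollary as an immediate consequence of the identity $u = fh$ just established in \eqref{relation:uf}, together with the strict positivity of $h$.

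First I would observe that $h = 2\langle m_1^2 m_2^2\rangle_{f,g}$ is an integral of the nonnegative quantity $m_1^2 m_2^2$ against the density $\rho_{f,g}$ defined in \eqref{def:rho}. Since $\rho_{f,g}(\mm) > 0$ everywhere on $\mathbb{S}^2$ (being the normalized exponential of a continuous function) and the integrand $m_1^2 m_2^2$ is strictly positive on the open subset of $\mathbb{S}^2$ where both $m_1 \ne 0$ and $m_2 \ne 0$, the integral $h$ is strictly positive.

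Then, given the algebraic identity $u = fh$ from \eqref{relation:uf}, dividing by $h > 0$ yields $f = u/h$, so the sign of $f$ coincides with the sign of $u$. In particular, if $u > 0$, then $f > 0$, which is the desired conclusion.

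There is essentially no obstacle here: the entire content of the corollary is the positivity of the constant $h$, which follows from elementary measure-theoretic reasoning about a positive integrand against a positive density on $\mathbb{S}^2$. The substantive work was already done in establishing \eqref{relation:uf} via the rotational gradient identity in the preceding lemma.
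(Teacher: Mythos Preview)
Your argument is correct and is exactly the intended one: the corollary is stated in the paper immediately after the lemma establishing $u=fh$, with no separate proof, precisely because it follows at once from $h=2\langle m_1^2m_2^2\rangle>0$ as you explain.
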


\begin{lemma}\label{lem:ineq-c}
It holds that:
\begin{itemize}
  \item $de-c^2>0$;
  \item If $f>0$, then $c<0$.
\end{itemize}
\end{lemma}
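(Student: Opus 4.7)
The plan is to reinterpret both inequalities as covariance statements for the probability measure $\rho_{f,g}$ on $\BS$. Setting $A:=m_3^2-\tfrac{1}{3}$ and $B:=m_1^2-m_2^2$, the definitions (\ref{def:c})--(\ref{def:e}) read $d=\tfrac{3}{2}\,\mathrm{Var}_{f,g}(A)$, $e=\tfrac{1}{2}\,\mathrm{Var}_{f,g}(B)$, and $c=\tfrac{\sqrt{3}}{2}\,\mathrm{Cov}_{f,g}(A,B)$, with $\mathrm{Var}_{f,g}$ and $\mathrm{Cov}_{f,g}$ taken against $\rho_{f,g}\,d\mm$.

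For $de-c^2>0$, Cauchy--Schwarz applied to the centered functions $A-\langle A\rangle_{f,g}$ and $B-\langle B\rangle_{f,g}$ in $L^2(\rho_{f,g}\,d\mm)$ gives $c^2\le de$. Equality would force these centered functions to be proportional on the support of $\rho_{f,g}$, hence on all of $\BS$ since $\rho_{f,g}>0$; but $\{1,A,B\}$ are linearly independent as smooth functions on $\BS$ (on the equator $m_3=0$, any relation $\alpha A+\beta B=\gamma$ forces $\beta=0$ and then $\alpha=0$), so the inequality is strict.

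For $c<0$ when $f>0$, I would pass to spherical coordinates $m_1=\sin\theta\cos\phi$, $m_2=\sin\theta\sin\phi$, $m_3=\cos\theta$, giving
\begin{align*}
\rho_{f,g}\,\sin\theta\,d\theta\,d\phi\ \propto\ e^{g(3\cos^2\theta-1)}\sin\theta\cdot e^{f\sin^2\theta\cos(2\phi)}\,d\theta\,d\phi.
\end{align*}
Only the last factor depends on $\phi$, so conditional on $\theta$, $\phi$ has density $\propto e^{f\sin^2\theta\cos(2\phi)}$ on $[0,2\pi)$, and $\langle\cos(2\phi)\mid\theta\rangle=I_1(f\sin^2\theta)/I_0(f\sin^2\theta)$ in terms of modified Bessel functions. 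Since $A$ depends only on $\theta$, the tower property yields
\begin{align*}
\mathrm{Cov}_{f,g}(A,B)=\mathrm{Cov}_\theta\Bigl(\cos^2\theta-\tfrac{1}{3},\ \sin^2\theta\cdot \tfrac{I_1(f\sin^2\theta)}{I_0(f\sin^2\theta)}\Bigr).
\end{align*}
Putting $s=\sin^2\theta$ and using $\cos^2\theta-\tfrac{1}{3}=\tfrac{2}{3}-s$, this equals $-\mathrm{Cov}_s(s,\psi(s))$ with $\psi(s):=s\,I_1(fs)/I_0(fs)$. For $f>0$ the ratio $I_1(fs)/I_0(fs)$ is positive and strictly increasing in $s$ (being the mean of $\cos\alpha$ under the density $\propto e^{fs\cos\alpha}$, whose derivative in the parameter equals the variance of $\cos\alpha$), so $\psi$ is strictly increasing on $(0,1]$. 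Chebyshev's covariance inequality then gives $\mathrm{Cov}_s(s,\psi(s))>0$, with strictness because the marginal of $s$ is nondegenerate ($\rho_{f,g}>0$ on $\BS$). Hence $c<0$.

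The main obstacle is part (2): one must find a coordinate system that isolates the $f$-dependence into a single angular factor so that $\phi$ can be integrated out, reducing the two-dimensional covariance on $\BS$ to a one-dimensional monotone covariance amenable to Chebyshev. Part (1) is immediate from Cauchy--Schwarz.
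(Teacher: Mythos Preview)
Your proof is correct and follows essentially the same route as the paper. For part~(1) both arguments are Cauchy--Schwarz on the centered functions; for part~(2) the paper introduces $\mathfrak{a}(x)=\int_0^{2\pi}e^{x\cos 2\phi}\,d\phi$ with $\mathfrak{a}'/\mathfrak{a}$ (your $I_1/I_0$) shown increasing by the same variance computation, then obtains the sign of $c$ via the explicit symmetrization $\int\!\!\int(\psi(s)-\psi(\tilde s))(s-\tilde s)\,d\mu\,d\tilde\mu$, which is exactly the standard proof of the Chebyshev covariance inequality you invoke by name.
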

\begin{proof}
To prove the first inequality, we show that the equation $dt^2+2ct+e=0$ has no real root. Recalling the definition of $c,d$ and $e$, we have for any real number $t$ that
\begin{align*}
&dt^2+2ct+e\\
=&\left\langle\left(\sqrt{\f{3}{2}}\left(m_3^2-\frac{1}{3}\right)t+\sqrt{\f{1}{2}}\left(m_1^2-m_2^2\right)\right)^2\right\rangle
-\left(\left\langle\sqrt{\f{3}{2}}\left(m_3^2-\frac{1}{3}\right)\right\rangle t+\left\langle\sqrt{\f{1}{2}}\left(m_1^2-m_2^2\right)\right\rangle\right)^2\\
=&\left\langle \big(\varrho(t)-\left\langle \varrho(t)\big\rangle\right)^2\right\rangle \ge 0,
\end{align*}
where $\varrho(t)=\sqrt{\f{3}{2}}(m_3^2-\frac{1}{3})t+\sqrt{\f{1}{2}}(m_1^2-m_2^2)$. In addition,  the inequality holds strictly since $\rho(t)$ is not a constant.
\par
Next we show that $c<0$ if $f>0$.  This can be proved by applying Lemma 1 in \cite{ZWFW}. Here we present a proof for completeness.
Define
\begin{align}
  \mathfrak{a}(x)=\int_{0}^{2\pi}\exp(x\cos2\phi) d\phi.
\end{align}
We can find that $  \frac{ \mathfrak{a}'(x)}{ \mathfrak{a}(x)}$ is an increasing function, since
by Cauchy-Schwarz inequality,
\begin{align*}
\mathfrak{a}^2\Big( \frac{ \mathfrak{a}'(x)}{ \mathfrak{a}(x)}\Big)'=\int_0^{2\pi}\cos^22\phi \exp(r\cos2\phi)d\phi\int_0^{2\pi} \exp(r\cos2\phi)d\phi-\Big(\int_0^{2\pi}\cos2\phi \exp(r\cos2\phi)d\phi\Big)^2>0.
\end{align*}

Let $(m_1,m_2,m_3)=(\sin\theta\cos\phi, \sin\theta\sin\phi, \cos\theta)(\theta\in[0,\pi], \phi\in[0,2\pi))$. Then it holds
\begin{align*}
Z:=&\int_\BS \exp(f(m_1^2-m_2^2)+gm_3^2)d\mm=\int_{0}^\pi \int_0^{2\pi}\exp(g\cos^2\theta+f\sin^2\theta\cos2\phi)d\phi\sin\theta d\theta\\
=&\int_{0}^\pi \sin^2\theta \exp(g\cos^2\theta)\mathfrak{a}(f\sin^2\theta)\sin\theta d\theta.
\end{align*}
Similarly, we have
\begin{align*}
\langle m_3^2\rangle=&~Z^{-1}\int_{0}^\pi \int_0^{2\pi}\cos^2\theta \exp(g\cos^2\theta+f\sin^2\theta\cos2\phi)d\phi\sin\theta d\theta\\
=&~Z^{-1}\int_{0}^\pi \cos^2\theta \exp(g\cos^2\theta)\mathfrak{a}(f\sin^2\theta)\sin\theta d\theta,
\end{align*}
and
\begin{align*}
\langle m_1^2-m_2^2\rangle=&~Z^{-1}\int_{0}^\pi \int_0^{2\pi}\sin^2\theta\cos2\phi \exp(g\cos^2\theta+f\sin^2\theta\cos2\phi)d\phi\sin\theta d\theta\\
=&~Z^{-1}\int_{0}^\pi \sin^2\theta \exp(g\cos^2\theta)\mathfrak{a}'(f\sin^2\theta)\sin\theta d\theta,\\
\langle m_3^2( m_1^2-m_2^2)\rangle=&~Z^{-1}\int_{0}^\pi \int_0^{2\pi}\cos^2\theta\sin^2\theta\cos2\phi \exp(g\cos^2\theta+f\sin^2\theta\cos2\phi)d\phi\sin\theta d\theta\\
=&~Z^{-1}\int_{0}^\pi \cos^2\theta\sin^2\theta \exp(g\cos^2\theta)\mathfrak{a}'(f\sin^2\theta)\sin\theta d\theta.
\end{align*}
Thus, by the definition of $c$, we get
\begin{align*}
c=&~Z^{-2}\bigg[\Big( \int_{0}^\pi\cos^2\theta\sin^2\theta\exp(g\cos^2\theta)\mathfrak{a}'(f\sin^2\theta) \sin\theta
d\theta\Big)\Big(\int_{0}^\pi\exp(g\cos^2\theta)\mathfrak{a}(f\sin^2\theta)\sin\theta d\theta\Big)\\
&-\Big(\int_{0}^\pi\cos^2\theta\exp(g\cos^2\theta)\mathfrak{a}(f\sin^2\theta)\sin\theta d\theta\Big)
\Big(\int_{0}^\pi\sin^2\theta\exp(g\cos^2\theta)\mathfrak{a}'(f\sin^2\theta)\sin\theta d\theta\Big) \bigg]\\
=&~Z^{-2}\int_0^\pi\int_0^\pi\Big(\sin^2\theta\frac{\mathfrak{a}'(f\sin^2\theta)}{\mathfrak{a}(f\sin^2\theta)}-
\sin^2\tilde\theta\frac{\mathfrak{a}'(f\sin^2\tilde\theta)}{\mathfrak{a}(f\sin^2\tilde\theta)}\Big)\Big(\cos^2\theta-\cos^2\tilde\theta\Big)\\
&\qquad\cdot \mathfrak{a}(f\sin^2\theta)\mathfrak{a}(f\sin^2\tilde\theta)\exp(g\cos^2\theta+g\cos^2\tilde\theta)\sin\theta d\theta \sin\tilde\theta d\tilde\theta\\
<& ~0,
\end{align*}
where we have used the fact that $ \frac{ \mathfrak{a}'(x)}{ \mathfrak{a}(x)}$ is an increasing function. The proof is finished.
\end{proof}

\subsection{Critical points of the singular bulk energy}
We recall some results on the critical points of the singular bulk energy:
\begin{align}\label{energy:ms}
\mathfrak{f}_{S}(Q)=B_Q:Q-\ln Z_Q -\alpha |Q|^2.
\end{align}
Define a monotonic increasing function $s_2:(-\infty,+\infty)\mapsto(-0.5, 1)$ as
\begin{align*}
s_2(\eta)=\frac{\int_{-1}^1(3x^2-1)\exp(\eta x^2)dx  }{2\int_{-1}^1\exp(\eta x^2)dx}.
\end{align*}
Then all the critical points of (\ref{energy:ms}) are characterized by the following proposition.
\begin{proposition}\cite{FS, LZZ,ZWFW}\label{prop:critical}
All the critical points of (\ref{energy:ms}) are given by
\begin{align}
Q=s_2(\eta)(\nn\otimes\nn-\frac13\II),\quad \nn\in\BS,
\end{align}
where $\eta$ and $\alpha$ satisfies the following relation:
\beq\label{eta}
\eta=\alpha s_2(\eta).
\eeq
For all $\alpha>0$, $\eta=0$ is always a solution of (\ref{eta}). In addition, we have
\begin{enumerate}
  \item when $\alpha<\alpha^*$, $\eta = 0$ is the only solution of (\ref{eta});
  \item when $\alpha=\alpha^*$, besides $\eta = 0$ there is another solution $\eta=\eta^*$ of (\ref{eta});
  \item  when $\alpha>\alpha^*$, besides $\eta=0$ there are two solutions $\eta_1>\eta^*>\eta_2$ of (\ref{eta}).
\end{enumerate}
\end{proposition}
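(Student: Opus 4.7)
The plan is to reduce the critical-point equation $B_Q-\alpha Q=0$ (that is, equation \eqref{crifb}) to a scalar bifurcation problem in a single parameter. Because $\mathfrak{f}_S$ is $SO(3)$-invariant and $Q$ is trace-free, the gradient $\partial_Q\mathfrak{f}_S$ is a polynomial expression in $Q$ whose scalar coefficients depend only on the two invariants $|Q|^2$ and $\mathrm{tr}\,Q^3$. Accordingly, the critical-point equation takes the algebraic form
\beqo
\lambda_1 Q+\lambda_2\bigl(Q^2-\tfrac{1}{3}|Q|^2\II\bigr)=0
\eeqo
for scalars $\lambda_1,\lambda_2$ depending on $Q$. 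On the open branch $\lambda_2\ne 0$ this forces $Q^2-\tfrac{1}{3}|Q|^2\II$ to be a multiple of $Q$; diagonalizing $Q=\mathrm{diag}(q_1,q_2,q_3)$ with $\sum q_i=0$ and comparing entries one obtains $q_i+q_j=c$ for every pair of distinct eigenvalues, which is impossible unless two of the $q_i$ coincide. The degenerate locus $\lambda_2=0$ is handled separately by the explicit form of the Bingham-based energy, and in either case every nonzero critical point must be uniaxial.

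Next, for a uniaxial $Q=s(\nn\otimes\nn-\tfrac13\II)$, Proposition \ref{prop:bingham} together with the $SO(2)$-symmetry around $\nn$ forces $B_Q=\eta(\nn\otimes\nn-\tfrac13\II)$ for some $\eta$. Setting $\nn=\ee_3$ and writing \eqref{relation:B-Q} in spherical coordinates, the $\phi$-integration is trivial, and after the substitution $x=\cos\theta$ one obtains
\beqo
s=\frac{\int_{-1}^{1}\tfrac12(3x^2-1)e^{\eta x^2}\,dx}{\int_{-1}^{1}e^{\eta x^2}\,dx}=s_2(\eta).
\eeqo
Substituting this into $B_Q=\alpha Q$ collapses the problem to the scalar equation $\eta=\alpha s_2(\eta)$, which is precisely \eqref{eta}.

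The final step is a one-dimensional analysis of $\eta=\alpha s_2(\eta)$. Directly from the integral one reads off $s_2(0)=0$ (so $\eta=0$ is always a solution), $\lim_{\eta\to-\infty}s_2(\eta)=-\tfrac12$, and $\lim_{\eta\to+\infty}s_2(\eta)=1$. Differentiating under the integral sign gives $s_2'(\eta)>0$ via Cauchy--Schwarz on the moments of $x^2$ against the measure $e^{\eta x^2}dx$. Setting $F(\eta):=\eta-\alpha s_2(\eta)$, one has $F(0)=0$, $F(\pm\infty)=\pm\infty$, and $F''=-\alpha s_2''$ changes sign exactly once on $\eta>0$, since $s_2$ is convex near $0$ but concave near $+\infty$ (both visible from the Gaussian asymptotics of the weight $e^{\eta x^2}$ near the boundary $|x|=1$). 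A careful case analysis in $\alpha$ then yields the three regimes: for $\alpha$ small, $F'>0$ throughout and $\eta=0$ is the unique root; at a critical $\alpha^*$, the two extrema of $F$ collapse into a single inflection with horizontal tangent at $\eta^*>0$; and for $\alpha>\alpha^*$ the function $F$ acquires two additional zeros $\eta_2<\eta^*<\eta_1$. The main obstacle is exactly this shape analysis of $s_2$ on $\mathbb{R}$---showing that $s_2''$ has a single zero on $\eta>0$ and that the negative branch yields no extra solutions---which relies on delicate monotonicity properties of exponential-Gaussian moments; in contrast, the algebraic uniaxial reduction and the derivation of the scalar equation $\eta=\alpha s_2(\eta)$ are comparatively clean.
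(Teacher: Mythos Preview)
The paper does not supply its own proof of this proposition; it is quoted directly from \cite{FS,LZZ,ZWFW}, so there is no in-text argument to compare against. I will therefore assess your sketch on its own merits.

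Your overall architecture---reduce to uniaxial $Q$ by isotropy, then study the scalar equation $\eta=\alpha s_2(\eta)$---is sound, and the derivation of the scalar equation from \eqref{relation:B-Q} is clean. The uniaxiality step, however, has a real gap. Writing $B_Q-\alpha Q=\lambda_1 Q+\lambda_2\bigl(Q^2-\tfrac13|Q|^2\II\bigr)$ and disposing of the branch $\lambda_2\neq 0$ is correct, but the sentence ``the degenerate locus $\lambda_2=0$ is handled separately by the explicit form of the Bingham-based energy'' is exactly where the content hides. For a genuinely biaxial $Q$ the tensors $Q$ and $Q^2-\tfrac13|Q|^2\II$ are linearly independent in $\Qa$, so a biaxial critical point forces \emph{both} $\lambda_1=0$ and $\lambda_2=0$; these are two equations in the two scalar invariants $(\mathrm{tr}\,Q^2,\mathrm{tr}\,Q^3)$, and nothing you have written excludes a simultaneous zero on the biaxial region. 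The references \cite{FS,LZZ,ZWFW} prove axisymmetry not by this invariant-theoretic shortcut but by working directly with the eigenvalue self-consistency relations for the Bingham distribution and exploiting specific monotonicity properties of one-dimensional exponential moments---that analysis is the substance of those papers and cannot be replaced by a hand-wave.

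On the scalar part you correctly flag the convexity structure of $s_2$ as the crux and do not claim to finish it. One correction: the assertion that ``the negative branch yields no extra solutions'' is not quite what happens. For $\alpha>7.5$ the smaller root $\eta_2$ is in fact negative (it passes through $0$ at $\alpha=7.5$ by a transcritical bifurcation, consistent with Proposition~\ref{prop:critical-stab}); what one actually needs is that $\eta\mapsto\eta/s_2(\eta)$ is monotone on $(-\infty,0)$ and has a unique interior minimum on $(0,\infty)$, and both of these require more than the endpoint asymptotics you invoke.
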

Furthermore, the stability/instability of critical points have also been clearly classified.
\begin{proposition}\cite{WZZ1, ZW}\label{prop:critical-stab}
\begin{enumerate}
  \item   When $\alpha<\alpha^*$, $Q=0$ is the only critical point. Thus, it is stable;
  \item When $\alpha^*\le \alpha<7.5$,  the solution corresponding to $\eta = 0$ and $\eta=\eta_1$ are both stable;
  \item  When $\alpha>7.5$, the solution corresponding to $\eta=\eta_1$ is the only stable solution.
\end{enumerate}
\end{proposition}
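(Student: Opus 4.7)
The plan is to diagonalize the Hessian of $\mathfrak{f}_S$ at each critical point identified in Proposition~\ref{prop:critical} and determine the sign of each eigenvalue on the five-dimensional space of trace-free symmetric tensors. From the Legendre-type duality (\ref{lag}) one has $\partial_Q(B_Q:Q-\ln Z_Q)=B_Q$, so $\mathrm{Hess}\,\mathfrak{f}_S = \nabla_Q B(Q)-\alpha\,\mathrm{Id}$. By Proposition~\ref{prop:bingham}(4), $\nabla_B Q$ is the covariance of $\mm\otimes\mm-I/3$ under $\rho_B$, positive definite and explicitly computable, so its inverse $\nabla_Q B$ is well defined at each critical point and the stability analysis reduces to comparing its spectrum with $\alpha$.

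For the isotropic critical point $Q=0$: $\rho_0\equiv 1/(4\pi)$ and the identity $\int_{\BS}m_im_jm_km_l\,d\mm/(4\pi)=(\delta_{ij}\delta_{kl}+\delta_{ik}\delta_{jl}+\delta_{il}\delta_{jk})/15$ yields $\nabla_B Q|_0=(2/15)\,\mathrm{Id}$ on trace-free symmetric matrices, hence $\mathrm{Hess}\,\mathfrak{f}_S|_0=(15/2-\alpha)\,\mathrm{Id}$. This is positive definite exactly when $\alpha<15/2=7.5$, which simultaneously produces the threshold separating parts (2) and (3) and establishes the stability of $Q=0$ in parts (1) and (2).

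For the two non-trivial critical points $Q_i=s_2(\eta_i)(\ee_3\otimes\ee_3-I/3)$, obtained after an $SO(3)$ rotation that places $\nn=\ee_3$, exploit the residual $O(2)$-symmetry around $\ee_3$ to decompose the trace-free symmetric tensors into three irreducible blocks: a one-dimensional longitudinal block along $\ee_3\otimes\ee_3-I/3$, a two-dimensional tilt block spanned by $\ee_j\otimes\ee_3+\ee_3\otimes\ee_j$ $(j=1,2)$, and a two-dimensional biaxial block spanned by $\ee_1\otimes\ee_1-\ee_2\otimes\ee_2$ and $\ee_1\otimes\ee_2+\ee_2\otimes\ee_1$. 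By Schur's lemma the Hessian contributes a single eigenvalue per block. The tilt block consists of Goldstone modes coming from the two-dimensional $SO(3)$-orbit of $Q_i$, hence has eigenvalue zero. The longitudinal block reduces to the one-dimensional potential along $s\,(\ee_3\otimes\ee_3-I/3)$ whose second derivative at $\eta_i$ carries the sign of $1-\alpha s_2'(\eta_i)$; a tangent-to-the-graph argument for $s_2$ versus the line $\eta/\alpha$ yields $\alpha s_2'(\eta_1)<1<\alpha s_2'(\eta_2)$, so already in this block $\eta_2$ is unstable and $\eta_1$ is stable.

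The main obstacle is the biaxial block at $\eta=\eta_1$: its sign does not follow from the scalar function $s_2$ alone, and instead requires controlling the biaxial susceptibility of the uniaxial Bingham density as $\eta$ varies. Using the moments $e,d,c$ of (\ref{def:abh})--(\ref{def:e}) evaluated on the axially symmetric Bingham distribution together with the single-variable representation $\mathfrak{a}(x)=\int_0^{2\pi}e^{x\cos 2\phi}\,d\phi$ from the proof of Lemma~\ref{lem:ineq-c}, the biaxial eigenvalue is expressible as a one-dimensional integral in $\theta$ whose sign is accessible through the log-convexity of $\mathfrak{a}$. I would verify positivity at the bifurcation point $\eta=\eta^*$ (forced by the fact that the branch $\eta_1(\alpha)$ emerges as a local minimum of $\mathfrak{f}_S$) and then propagate the sign along the branch $\eta_1(\alpha)$ by monotonicity; a parallel computation at $\eta_2$ shows that the biaxial block there has the same negative sign as the longitudinal one, completing part (3). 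The borderline $\alpha=\alpha^*$ must be handled separately because the longitudinal block degenerates, but positivity of the remaining blocks is preserved by the minimum-preserving structure of the saddle-node bifurcation.
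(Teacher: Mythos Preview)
The paper does not supply a proof of this proposition: it is quoted verbatim from \cite{WZZ1, ZW} and left unproved, so there is no argument in the present paper to compare your attempt against.

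That said, your outline is structurally correct and largely matches what the cited references do. The identification $\mathrm{Hess}\,\mathfrak{f}_S=(\nabla_B Q)^{-1}-\alpha\,\mathrm{Id}$, the isotropic computation yielding the threshold $15/2$, the $O(2)$-irreducible decomposition at uniaxial points, the recognition of the tilt directions as Goldstone (zero) modes, and the longitudinal sign read off the graph of $s_2$ against the line $\eta/\alpha$ are all right.

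The gap is in the biaxial block. First, at a uniaxial critical point one has $f=0$, so the function $\mathfrak{a}$ enters only through $\mathfrak{a}(0)=2\pi$ and its log-convexity contributes nothing; the biaxial susceptibility reduces to a pure $\theta$-integral, namely $h=e=\tfrac14\langle\sin^4\theta\rangle$ under $\rho\propto e^{\eta\cos^2\theta}$, and the biaxial Hessian eigenvalue is $1/h-\alpha$. Second, your plan to ``verify positivity at $\eta^*$ and propagate by monotonicity'' is not an argument: you have not said which scalar function of $\eta$ is monotone, and in fact $1/h(\eta)-\alpha$ is not obviously monotone along the branch because both $h$ and the constraint $\eta=\alpha s_2(\eta)$ vary. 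In \cite{ZW,WZZ1} this step is handled by an explicit inequality for the axisymmetric Bingham moments (equivalently, an identity relating $h$ to $s_2$ and $s_2'$) that forces $1/h>\alpha$ precisely on the $\eta_1$-branch and $1/h<\alpha$ on the $\eta_2$-branch; without that ingredient the biaxial conclusion is unsupported. Finally, at $\alpha=\alpha^*$ the longitudinal eigenvalue vanishes, so ``stable'' in part~(2) must be read as ``local minimizer'' (nonnegative Hessian with a degenerate direction), which you note but should make explicit.
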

Throughout  this paper, we choose $\alpha>7.5$ and $s_2=s_2(\eta_1)$. In this setting, $Q=s_2(\nn\otimes\nn-\frac13\II)$ is the global minimizer of $\mathfrak{f}_{S}$ and therefore
\begin{align}\label{lowbound}
\mathfrak{f}_{S}(Q)\ge \mathfrak{f}_{S}\big(s_2(\nn\otimes\nn-\frac13\II)\big).
\end{align}

\section{Existence of radial symmetric solutions}\label{sec:exist}
This section is devoted to proving Theorem \ref{thm:existence}. We first prove the existence of solutions for the
finite domain case ($R<\infty$). Next, in Subsection \ref{subsec:mono},
we prove the monotonicity of the obtained solution. In Subsection \ref{subsec:exist-infinite}, we construct solutions
for the case $R=\infty$ based on the solutions obtained in the finite domain case.

\subsection{Existence of solution to the ODE system on finite domain}\label{subsec:exist-finite}

This functional is defined on admissible set
\begin{align}
\mathcal{T}=\left\{(u,v):~~\sqrt{r}u',\sqrt{r}v',\f{u}{\sqrt{r}},\sqrt{r}v \in L^2(0,R),u(R)=\frac{s_2}{2},v(R)=-\frac{s_2}{6}, uF_1+vF_2\in \Qp\right\}.
\end{align}

\begin{proposition}\label{prop:exist-finite}
For finite $R>0$, there exist a solution $(u,v)$ of (\ref{ODE:u})-(\ref{ODE:v}) with boundary condition (\ref{BC:uv-R})-(\ref{BC:uv-0}) satisfies:
\begin{itemize}
\item $u,v\in C^2([0,R])\cap C^\infty((0,R))$;
\item $u>0,\, v<0,\, 3v+u<0$ in $(0,R)$;
\item $(u,v)$  is a local minimizer of
the reduced energy $\mathcal{E}$ in (\ref{energyuv}). That is
\begin{align*}
\mathcal{J}(\mu,\nu):=&\frac{d^2}{dt^2}\mathcal{E}(u+t\mu,v+t\nu)\Big|_{t=0}\\
   \nonumber = &2 \int_0^{R}\Big\{({\partial_r\mu})^2 +\mu^2\Big(\frac{d}{de-c^2}-\alpha+\frac{k^2}{r^2}\Big)+3({\partial_r\nu})^2
+3\nu^2\Big(\frac{e}{de-c^2}-\alpha\Big) -\frac{2\sqrt{3}c\mu\nu}{de-c^2}\Big\} rdr\geq 0.
\end{align*}
\end{itemize}
\end{proposition}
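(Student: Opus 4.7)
I would apply the direct method to minimize $\mathcal{E}$ over $\mathcal{T}$. First, $\mathcal{T}$ is nonempty (any smooth curve inside $\Qp$ connecting an admissible value at $r=0$ to the boundary datum $(s_2/2,-s_2/6)$ at $r=R$ suffices), and $\mathcal{E}$ is bounded below since the kinetic terms are nonnegative and $\mathfrak{f}_S$ is bounded below by (\ref{lowbound}). For a minimizing sequence $(u_n,v_n)$, the kinetic bound yields, along a subsequence, weak convergence in the appropriate weighted $H^1$ spaces plus pointwise a.e.\ convergence via Rellich-type compactness. The Dirichlet part (including $k^2u^2/r^2$) is weakly lower semicontinuous by standard arguments. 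For the bulk part, I would use the convex duality representation from (\ref{lag}), namely $\mathfrak{f}_S(Q)+\frac{\alpha}{2}|Q|^2=\sup_{B\in\Qa}\{B:Q-w(B)\}$, which realises the potential (up to the strongly continuous quadratic $-\frac{\alpha}{2}|Q|^2$) as a supremum of affine functionals, hence weakly lower semicontinuous. This produces a minimizer $(u_*,v_*)$.

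\textbf{Containment in $\Qp$ and regularity.} The singular nature of $\mathfrak{f}_S$ is the tool that keeps the minimizer strictly inside $\Qp$: finiteness of $\mathcal{E}(u_*,v_*)$ forbids the eigenvalues of $Q_*=u_*F_1+v_*F_2$ from touching $\{-\tfrac13,\tfrac23\}$ on any positive-measure set, since $\mathfrak{f}_S$ blows up there. Hence $Q_*\in\Qp$ a.e., and together with smoothness of $(f,g)$ on $\Qp$ from Proposition~\ref{prop:bingham}(4), a standard bootstrap from the Euler--Lagrange system (\ref{ODE:u})--(\ref{ODE:v}) yields $u_*,v_*\in C^\infty((0,R))$, upgrading the a.e.\ confinement to a pointwise one; $C^2$-regularity up to the endpoints follows from smoothness of $Q_*$ at the origin (radial symmetry) together with the boundary conditions at $r=R$. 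The minimizer is thus a classical solution, and its second variation is automatically nonnegative, so it is a local minimizer in the sense of (\ref{def:min-1d}).

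\textbf{Sign conditions and main obstacle.} For $u_*>0$: the integrand of $\mathcal{E}$ is invariant under $u\mapsto -u$, which follows from $f(-u,v)=-f(u,v)$ and the swap $(m_1,m_2)\mapsto(m_2,m_1)$ in the integrals defining $\rho_{f,g}$. Hence $(|u_*|,v_*)$ is also a minimizer, therefore smooth; at any zero $r_0$ of $u_*$ smoothness of $|u_*|$ forces $u_*'(r_0)=0$, and the EL equation together with $f(0,v)=0$ (from the identity $u=fh$ in (\ref{relation:uf})) gives $u_*''(r_0)=0$, so by ODE uniqueness $u_*\equiv 0$, contradicting $u_*(R)=s_2/2>0$. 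Hence $u_*$ has a definite sign and is nonnegative; the strong maximum principle applied to (\ref{ODE:u}) then gives $u_*>0$ on $(0,R)$. For $w:=u_*+3v_*$, adding (\ref{ODE:u}) and $3$ times (\ref{ODE:v}) and using the algebraic identity $f+3g=w/b$ extracted from (\ref{relation:fguv}) yields
\begin{equation*}
w''+\frac{w'}{r}=\left(\frac{1}{b}-\alpha\right)w+\frac{k^2}{r^2}u_*,
\end{equation*}
with $w(R)=s_2/2+3(-s_2/6)=0$. Combined with $u_*\geq 0$ and a suitable maximum-principle argument, this gives $w\leq 0$ and then $w<0$ strictly on $(0,R)$; finally $v_*<0$ is immediate from $u_*>0$ and $w<0$. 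The single hardest step is this last maximum-principle argument: the zero-order coefficient $1/b-\alpha$ has no \emph{a priori} definite sign, so one cannot apply the standard principle directly. I would overcome this by combining $\alpha>7.5$ with bounds on $b=2\langle m_2^2m_3^2\rangle$ that follow from uniform control of the eigenvalues of $Q_*$ (via the interior regularity established above), and, if necessary, by rescaling $w$ by a suitable positive function to absorb the zero-order term before invoking the principle.
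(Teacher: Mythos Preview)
Your overall strategy via the direct method is sound and close to the paper's, and your argument for $u_*>0$ is fine. The genuine gap is in the sign analysis for $v_*$ and $w=u_*+3v_*$. There are two obstructions to your maximum-principle argument for $w$:
\begin{itemize}
\item The zero-order coefficient $1/b-\alpha$ has no uniform sign. The quantity $b=2\langle m_2^2m_3^2\rangle$ can be made small by pushing $Q$ towards suitable uniaxial states, so $1/b$ can exceed $\alpha$; interior confinement of the eigenvalues only gives a solution-dependent lower bound on $b$, not the inequality $b>1/\alpha$ that you would need. Your proposed fix (bounds on $b$ plus rescaling by a positive function) is not fleshed out, and there is no evident choice that absorbs a coefficient of the wrong sign without creating new bad terms.
\item You have no boundary information at $r=0$: $w(0)=3v_*(0)$, and minimizing over the full $\mathcal{T}$ gives no a priori sign for $v_*(0)$. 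Without $w(0)\le 0$ the maximum principle cannot exclude a positive lobe of $w$ near the origin.
\end{itemize}

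The paper resolves both issues by a different route. It minimizes over the constrained set $\mathcal{T}_-=\{(u,v)\in\mathcal{T}:v\le 0\}$, producing an obstacle-type Euler--Lagrange system where (\ref{ODE:v}) holds only as an inequality. An energy argument (not a maximum principle) then shows, using $\alpha>7.5$ together with $\frac{a-b}{ab}u>0$, that $\partial\mathcal{E}/\partial v>0$ for $v$ in a strip $[\delta,0]$, so the constraint is inactive and in fact $\limsup_{r\to0^+}v(r)<0$. With $v\le0$ in hand the paper works with $\tilde w:=v/u+1/3$ rather than $u+3v$; for this choice the zero-order coefficient is $\frac{a-b}{6ab}\big(\frac{3v}{u}-\frac13\big)<0$ (using $a>b$, which follows from $f>0$, and $v\le 0<u$), the forcing term $-\frac{v}{u}\frac{k^2}{r^2}$ has the favorable sign, and the boundary values $\tilde w(R)=0$, $\tilde w(0^+)<0$ are both available. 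The strong maximum principle then yields $\tilde w<0$ on $(0,R)$, i.e.\ $u+3v<0$, and $v<0$ follows. The restriction to $\mathcal{T}_-$ together with the energy monotonicity near $v=0$ is the missing ingredient in your plan.

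A minor point: ``a.e.\ confinement in $\Qp$'' deduced from finite energy is not enough to justify taking variations and deriving the Euler--Lagrange system; one needs the eigenvalues to stay uniformly away from $-\tfrac13$ and $\tfrac23$. The paper obtains this via a truncation/competitor argument before writing down the equations.
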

\begin{proof}
The proof follows closely the proof of \cite[Proposition 3.1]{INSZ2}, with some necessary modifications for singular bulk potential.
It will be divided into several steps:

\medskip

\textbf{Step 1: } Existence of minimizer of $\mathcal{E}$ on $\mathcal{T}_{-}=\{(u,v)\in\mathcal{T},v\leq 0\}$.

From (\ref{lowbound}), it holds that
\beq
\mathcal{E}(u,v)\geq \int_0^R \mathfrak{f}_S(diag\{u-v,-u-v,2v\}) rdr \ge \frac{R^2}{2} \mathfrak{f}_S(diag\{\frac23 s_2,-\frac13 s_2, -\frac13 s_2\}).
\eeq
which indicates that $\mathcal{E} $ is bounded below
for all $(u,v)\in\mathcal{T}_{-}$. Therefore there exists a minimizing sequence $(u_n,v_n)$ such that
\beq
\lim\limits_{n\rightarrow +\infty}\mathcal{E}(u_n,v_n)\rightarrow \inf\limits_{(u,v)\in \mathcal{T}_{-}}\mathcal{E}(u,v).
\eeq
And we also have there exists $(u,v)$ such that a subsequence of $(u_n,v_n)$ (also denoted by $(u_n,v_n)$) converges weakly to $(u,v)$ in $[H^1((0,R),rdr)\cap L^2((0,R),\f{dr}{r})]\times H^1((0,R),rdr)$. Use the weak lower semi-continuity of the Dirichlet term in $\mathcal{E}$, we get
\beq
\mathcal{E}(u,v)\leq \liminf\limits_{n\rightarrow +\infty} \mathcal{E}(u_n,v_n)
\eeq
Therefore $(u,v)$ is a global minimizer of $\mathcal{E}$ on $\mathcal{T}_-$. Note that as $\lam(Q)\to -\f13\text{ or }\f23$ , the bulk energy density will blow up. Thus by a truncation argument, there is some positive $\e>0$ such that $u\in(-\f12+\e,\f12-\e)$ and $v\in (-\f16+\e,0]$. This guarantees that we can calculate the first variation of the energy functional and conclude that the minimizer $(u,v)$ satisfies the following equality and inequality distributionally on $(0,R)$:
\begin{eqnarray}
  \label{odeuv3u}u''+\frac{u'}{r}-\frac{k^2}{r^2}u &=f(u,v)-\alpha u,  \\
\label{odeuv3v}   v''+\frac{v'}{r} &\geq g(u,v)-\alpha v,
\end{eqnarray}
with boundary condition (\ref{BC:uv-R})-(\ref{BC:uv-0}).
\par
According to (\ref{relation:uf}), we can deduce that $f(-u, v)=-f(u,v)$. Thus, $\mathcal{E}(u,v)=\mathcal{E}(-u,v)$.
Then we can deduce that $(|u|,v)$ is also
a minimizer of the reduced energy $\mathcal{E}$ and therefore it also satisfy (\ref{ODE:v}). Applying strong maximum
principle to the equation \eqref{ODE:u}(here we use $f=\frac{u}{h}$), we obtain that $|u|>0$ on $(0,R)$,
thus we have
\begin{equation}
u(r)>0,  \quad\forall x \in (0,R).
\end{equation}
On the open set $\{v<0\}$, the inequality in (\ref{odeuv3v}) becomes an equality
and by standard regularity arguments and bootstrap method we conclude that
$(u,v)\in C^{\infty}((0,R]\cap \{v<0\})\times C^\infty((0,R]\cap \{v<0\})$.
Moreover, since $u,v$ are continuous in $(0,R]$, by \eqref{odeuv3u} we have $u\in C^2((0,R])$.

\medskip
\textbf{Step 2}: Prove $\limsup\limits_{r\rightarrow 0^+}v(r)<0$.


Consider the reduced energy $\mathcal{E}$ \eqref{energyuv}, take the derivative on $v$, we get
\begin{equation}
\f{\partial \mathcal{E}}{\partial v}=6g-6\alpha v=6(\frac{a+b}{2ab}-\alpha)v+\frac{a-b}{ab}u
\end{equation}
We will prove the argument by contradiction. Assume $\limsup\limits_{r\rightarrow 0^+}v(r)=0$.
Since $u>0$ on $(0,R)$, we have $f>0$ and therefore $a-b>0$. Moreover, when $u,v$ are
both near 0, $f,g$ are also near 0 since the map $B(Q):\mathcal{Q}_{phy}\rightarrow\mathcal{Q} $
is smooth. When $u=0,v=0$, by direct calculation we have that $a=b=\f{2}{15}$. Next we can prove
the following argument.\\
\par
\textbf{Claim:} when $\alpha>7.5$, there exists $\delta<0$ such that $\mathcal{E}$ is monotone increasing in $v$ when $v\in[\delta,0]$.\\
\textbf{Proof of the claim: }
Assume $\alpha>7.5$, then there exists $\delta_0<0,\epsilon_0>0$ such that when
$u\in[0,\epsilon_0],v\in[\delta_0,0]$, $\frac{a+b}{2ab}-\alpha<0$. In such case it
is clear that $\frac{\partial \mathcal{E}}{\partial v}>0$.
\par
Then we fix $\epsilon_0$. For any $u\in(\epsilon_0,\frac{1}{2}),v\in[\delta_0,0]$, there
exists $C_1>0,C_2>0$ such that $a-b>C_1$ and $\max\{a,b\}<C_2$. Therefore there
exists $C_3>0$ such that when $\frac{a+b}{2ab}-\alpha>0$,
\begin{equation*}
\frac{(a-b)u}{6(a+b-2\alpha ab)}>C_3.
\end{equation*}
Hence, there exists $\delta<0$ such that $\frac{\partial \mathcal{E}}{\partial v}>0$ when $v\in[\delta,0]$. Thus the claim is proved.\\
\par

Then we can take a interval $(R_1,R_2)\in (0,R)$ satisfy $v>\delta$ in $(R_1,R_2)$, $v(R_2)=\delta$
and either $R_1=0$ or $v(R_1)=\delta$ and set the value of $v$ to be $\delta$ on $(R_1,R_2)$ to get
a new $\tilde{v}$. It is clear that $\mathcal{E}(u,\tilde{v})<\mathcal{E}(u,v)$, which contradicts the
fact that $(u,v)$ is a global minimizer of $\mathcal{E}$ on $\mathcal{T}_-$.

\medskip
\textbf{Step 3}: Prove that $3v+u<0$ in $(0,R)$.

Let $w=\frac{v}{u}+\frac{1}{3}$, direct calculation combined with (\ref{ODE:v}) and (\ref{relation:fguv}) gives that
\beq\label{eqw}
w''+(\f{1}{r}+\f{2u'}{u})w'+\f{a-b}{6ab}(\f{3v}{u}-\f{1}{3})w\geq -\f{v}{u}(\f{k^2}{r^2}).
\eeq
We have $-\f{v}{u}(\f{k^2}{r^2})\geq 0$ and $\f{a-b}{6ab}(\f{3v}{u}-\f{1}{3})<0$. We also have
that $w(R)=0$ and $w(0)<0$ due to step 2. Then we can apply strong maximum principle for (\ref{eqw}) to get $w<0$ on $(0,R)$.
\par

\medskip
\textbf{Step 4}: Completing the proof of the proposition.

We already get the existence of solution that satisfies $(u,v)\in C^\infty(0,R)\cap C^2(0,R]$ and $u>0,v<0$.
We still need to verify the boundary condition and prove the regularity of $(u,v)$ up to boundary $r=0$. The proof we show here mimic the arguments in the proof of \cite[Proposition 2.3]{INSZ2}. Since the global minimizer
of the reduced energy functional on $\mathcal{T}_-$ is also a local minimizer of the energy functional
on $\mathcal{T}$, $Q=u(r)F_1+v(r)F_2$ belongs to $H_{loc}^1(B_R(0),\Qp)$ and $Q$
satisfises (\ref{eleq}) on $B_R(0)\backslash\{0\}$. We conclude that $Q$ satisfies (\ref{eleq}) on $B_R(0)$
because a point has zero Newtonian capacity in 2D. Standard elliptic regularity theory implies the smoothness
of $Q$ inside $B_R(0)$. Thus $v(r)=\f{1}{6}Q\cdot F_2 $ can be written as a smooth function $V(x,y)$ in $B_R(0)$.
Then $v(r)=V(r,0)$ for all $r\in (0,R)$, and we have that $v(r)$ extends to a smooth even function on $(-r,r)$.
Therefore we get $v'(0)=0$ and $v(r)\in C^\infty([0,R))$.
\par
Recalling the fact that $u(r)\in H^1((0,R),rdr)\bigcap L_2((0,R),\f{dr}{r}) $,  we have by Cauchy-Schwartz inequality
\beq
u(r_2)^2-u(r_1)^2=\int_{r_1}^{r_2}u(r)u'(r)dr\leq \int_{r_1}^{r_2} \f{u(r)^2}{r}dr\int_{r_1}^{r_2}u'(r)^2r dr,\qquad \forall 0<r_1<r_2<R.
\eeq
Thus $u(r)$ is continuous on $[0,R]$ and $u\in L_2((0,R), \f{dr}{r})$ implies $u(0)=0$.
\par
Since $u$ and $v$ are both continuous on $[0,R]$, $f$ and $g$ are also continuous on $[0,R]$.
In particular, $\f{f(u,v)}{u}-\alpha=\f{1}{h}-\alpha$ is continuous on $[0,R]$. Thus, $u$ satisfies that
\beq
u''+\frac{u'}{r}-\frac{k^2}{r^2}u =(\f{1}{h}-\alpha)u,\qquad\text{for } r\in(0,R).
\eeq
 Then we have (by \cite[Proposition 2.2]{INSZ0}) that the function $U(r)=\f{u(r)}{r^{|k|}}$
 is continuously differentiable up to $r = 0$ and satisfies $U'(0)=0$. Thus, it's straightforward to deduce that $\f{u'}{r}-\f{k^2u}{r^2}$ is continuous on $[0,R]$, and therefore
 $u\in C^2([0,R])$. The proof is complete.\end{proof}

\subsection{Monotonicity of the solution}\label{subsec:mono}
In this subsection, we prove the following proposition, which implies the solution obtained in Subsection \ref{subsec:exist-finite} is monotonic.
\begin{proposition}\label{prop:monotonity}
If $(u,v)$ is a local minimizer of the reduced energy $\mathcal{E}$, then we have $u'(r)>0, v'(r)<0$ for $r\in(0,R)$.
\end{proposition}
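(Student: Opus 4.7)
The plan is to prove monotonicity of any local minimizer $(u,v)$ by applying a strong maximum principle to the derivative pair $(U,V):=(u',v')$, combined with boundary sign analysis at $r=0$ and $r=R$. As a preliminary, the signs $u>0$, $v<0$ on $(0,R)$ established in the proof of Proposition \ref{prop:exist-finite} can be recovered verbatim for any local minimizer via strong maximum principle applied to (\ref{ODE:u})--(\ref{ODE:v}) together with the symmetry $f(-u,v)=-f(u,v)$ derived from $u=fh$. Differentiating the ODE system in $r$ then yields the coupled linear elliptic system
\begin{align*}
U'' + \frac{U'}{r} - \frac{k^2+1}{r^2} U &= (f_u - \alpha) U + f_v V - \frac{2k^2 u}{r^3}, \\
V'' + \frac{V'}{r} - \frac{V}{r^2} &= g_u U + (g_v - \alpha) V,
\end{align*}
where $f_u,f_v,g_u,g_v$ are entries of the Bingham Jacobian read off by differentiating (\ref{relation:fg-u})--(\ref{relation:fg-v}). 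The crucial features are the strictly negative source $-2k^2 u/r^3$ in the first equation (thanks to $u>0$) and the signs of the off-diagonal couplings $f_v$ and $g_u$, to be extracted from the identities of Section~2 combined with the positive definiteness of $\nabla_B Q$ in Proposition~\ref{prop:bingham}(4).

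Next I would analyze the boundary behavior of $(U,V)$. Near $r=0$: Step~4 of the existence proof yields $u(r)=r^{|k|}\tilde U(r)$ with $\tilde U\in C^1([0,R])$ and $\tilde U(0)>0$ (otherwise $u$ would vanish identically in a neighborhood of the origin, against $u>0$), so $U(r)>0$ for $0<r\ll 1$; for $V$ the smoothness of $v$ at $0$ gives $V(0)=0$ and the ODE yields $2v''(0) = g(0,v(0))-\alpha v(0)$, whose strict negativity follows from $v(0)<0$ combined with the identities of Section~2, so $V(r)<0$ for small $r>0$. Near $r=R$: since the Dirichlet data $(u(R),v(R))=(s_2/2,-s_2/6)$ corresponds to a minimizer of $\mathfrak{f}_S$, a standard comparison argument in the singular bulk energy forces $u<s_2/2$ and $v>-s_2/6$ in the interior, and Hopf's lemma at $r=R$ then gives $U(R)>0$ and $V(R)<0$.

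Finally, suppose for contradiction that $U$ has a first interior zero $r_0\in(0,R)$, so $U(r_0)=0$, $U'(r_0)\le 0$, and $U>0$ on $(0,r_0)$; the parallel argument yields $-V>0$ on the same interval. Viewing $(U,-V)$ as a solution of a cooperative $2\times 2$ linear elliptic system on $(0,r_0)$ with strictly nonpositive source in the $U$-equation, the strong maximum principle forces $U\equiv 0$ on $(0,r_0)$, contradicting $U>0$ near $0$; the same scheme applied to $V$ at its first interior zero excludes the corresponding case. The main technical obstacle will be to verify the cooperative structure rigorously, i.e.\ to pin down the signs of $f_v$ and $g_u$ by explicit differentiation of the Bingham relations, as well as the strict negativity of $v''(0)$; should the system fail to be purely cooperative, the fallback is to substitute test functions of the form $(\mu,\nu)=(U\zeta,V\zeta)$ with cut-offs $\zeta$ concentrated near $r_0$ into the second variation inequality $\mathcal{J}(\mu,\nu)\ge 0$, thereby converting the hypothetical sign failure into a strict energy decrease and hence a contradiction with local minimality.
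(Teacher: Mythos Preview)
Your primary maximum-principle approach has a structural gap. Differentiating the Bingham relations gives $f_v = -\sqrt{3}c/(de-c^2)$ and $g_u = -c/(\sqrt{3}(de-c^2))$; since $c<0$ when $u>0$ while $de-c^2>0$ (Lemma~\ref{lem:ineq-c}), both off-diagonal entries are \emph{positive}. Hence the linear system for $(U,V)=(u',v')$ is cooperative, but the pair you need to be jointly positive is $(U,-V)$, for which the system becomes \emph{competitive}, and no strong maximum principle is available for it. Concretely, at a putative first zero $r_0$ of $U$ with $V(r_0)\le 0$ you get only $U'(r_0)\le 0$; if $U'(r_0)<0$ the solution simply crosses zero with no contradiction, and Hopf's lemma in fact forces this strict inequality rather than excluding it. The assertion that ``the parallel argument yields $-V>0$ on the same interval'' is circular. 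Your boundary input is also incomplete: the strict sign $v''(0)<0$ and the interior bounds $u<s_2/2$, $v>-s_2/6$ required for Hopf at $r=R$ are not established for a general local minimizer.

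The paper proceeds essentially along your fallback, but with a specific construction that makes the algebra close. One tests the second variation $\mathcal{J}\ge 0$ with $\mu=u'\chi$ and $\nu=v'\eta$, where $\chi=\mathbf{1}_{\{uu'<0\}}$ and $\eta=\mathbf{1}_{\{cv'/(u(de-c^2))<0\}}$ are the indicator functions of the putative bad sets (not smooth cut-offs near a single zero). After substituting the differentiated equations (\ref{3de:u})--(\ref{3de:v}) and integrating by parts---with the boundary contribution at $r=R$ controlled through the bulk-minimizer property of $(s_2/2,-s_2/6)$---one is left with a sum of terms that are each nonpositive; the mixed term carries the factor $(\chi-\eta)^2 pq$, whose sign analysis is exactly where $c<0$ enters. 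This forces both bad sets to be empty, yielding $u'\ge 0$ and $v'\le 0$. Strict monotonicity is then obtained pointwise, via the third-derivative equations, at any hypothetical interior zero of $u'$ or $v'$.
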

\begin{rmk}
In this proposition, we do not assume $(u,v)$ to be the solution constructed in Proposition \ref{prop:exist-finite}. The monotonicity property is satisfied by all local minimizers of $\mathcal{E}(u,v)$.
\end{rmk}
\begin{proof}
Firstly we recall from (\ref{relation:fg-u})-(\ref{relation:fg-v}) the following relations:
\begin{align*}
\frac{\int_{\mathbb{S}^2}(m_1^2-m_2^2)\exp({f(m_1^2-m_2^2)+g(2m_3^2-m_1^2-m_2^2)})
d\mm}{\int_{\BS}\exp({f(m_1^2-m_2^2)+g(2m_3^2-m_1^2-m_2^2)})d\mm}=2u,\\
\frac{\int_{\mathbb{S}^2}(2m_3^2-m_1^2-m_2^2)\exp({f(m_1^2-m_2^2)+g(2m_3^2-m_1^2
-m_2^2)})d\mm}{\int_{\BS}\exp({f(m_1^2-m_2^2)+g(2m_3^2-m_1^2-m_2^2)})d\mm}=6v.
\end{align*}
Considering two sides of the first equality as functions of independent variables $(u,v)$ and taking derivative with respect to $u$, we get:
\begin{align*}
&f_u\Big(\langle(m_1^2-m_2^2)^2\rangle-\langle m_1^2-m_2^2\rangle^2\Big)\\
&\qquad+g_u\Big(\langle(m_1^2-m_2^2)(2m_3^2-m_1^2-m_2^2)\rangle-\langle 2m_3^2-m_1^2-m_2^2\rangle\langle m_1^2-m_2^2\rangle\Big)
=2,
\end{align*}
or equivalently
\begin{align*}
f_ue+\sqrt{3}g_uc=1.
\end{align*}
Similarly, taking derivative with respect to $u$ for the second equality, we have
\begin{align*}
f_uc+\sqrt3g_ud=0.
\end{align*}
Combine the above two equalities, we obtain:
\begin{align}
f_u=\frac{d}{de-c^2},\quad g_u=\frac{-c}{\sqrt{3}(de-c^2)}.
\end{align}
Taking derivative with respect to $v$ for both equalities, we can also obtain
\begin{align*}
f_ve+\sqrt{3}g_vc=0,\quad f_vc+\sqrt3g_vd=\sqrt3,
\end{align*}
which gives
\begin{align}
f_v=\frac{-\sqrt3c}{de-c^2},\quad g_v=\frac{e}{de-c^2}.
\end{align}

Taking derivative with respect to $r$ on the ODE system (\ref{ODE:u})-(\ref{ODE:v}),
we have
\begin{align}\label{u3dev}
u'''+\frac{u''}{r}-\frac{(1+k^2)u'}{r^2}+\frac{2k^2u}{r^3}&=(f_u-\alpha)u'+f_vv',\\
\label{v3dev}v'''+\frac{v''}{r}-\frac{v'}{r^2}&=g_uu'+(g_v-\alpha)v'.
\end{align}
Therefore, we have
\begin{align}
\label{3de:u}u'''+\frac{u''}{r}-\frac{(1+k^2)u'}{r^2}+\frac{2k^2u}{r^3}&=(\frac{d}{de-c^2}-\alpha)u'-\frac{\sqrt3c}{de-c^2}v',\\
\label{3de:v}v'''+\frac{v''}{r}-\frac{v'}{r^2}&=-\frac{c}{\sqrt{3}(de-c^2)}u'+(\frac{e}{de-c^2}-\alpha)v'.
\end{align}

\medskip

Since $(u,v)$ is assumed to be a local minimizer of the reduced energy functional $\mathcal{E}$, the second variation should be nonnegative, i.e.,
\begin{align}\label{var:J}
\mathcal{J}(\mu,\nu)&=\frac{d^2}{dt^2}\mathcal{E}(u+t\mu,v+t\nu)\\
   \nonumber = &2 \int_0^{R}\Big\{({\partial_r\mu})^2 +\mu^2\Big(\frac{d}{de-c^2}-\alpha+\frac{k^2}{r^2}\Big)+3({\partial_r\nu})^2
+3\nu^2\Big(\frac{e}{de-c^2}-\alpha\Big) -\frac{2\sqrt{3}c\mu\nu}{de-c^2}\Big\} rdr\geq 0.
\end{align}
Let
\beq
\nonumber p(r)=uu',\qquad q(r)=\f{cv'}{u(de-c^2)}, \qquad \text{for }r\in[0,R].
\eeq
Firstly we show that $p(r)$ and $q(r)$ are non-negative. If $\{p(r)<0\}\cup \{q(r)<0\}\neq \emptyset$, we define
\beq
\nonumber \chi(r)=\mathbf{1}_{\{p(r)<0\}},\qquad \eta(r)=\mathbf{1}_{\{q(r)<0\}},
\eeq
and take $\mu=u'\chi$, $\nu=v'\eta$. As $u>0, c<0, de-c^2>0$ on $(0,R)$, we have that  $u'(r_1)=0$ when $p(r_1)=0$ and that  $v'(r_2)=0$ when $q(r_2)=0$.
Thus $\mu$ and $\nu$ are continuous on $[0,R]$. Substituting this into (\ref{var:J}),  we have
\begin{align}
\label{mono:ibp} 0 &\leq \int_0^{R}\Big\{({\partial_r\mu})^2 +\mu^2\Big(\frac{d}{de-c^2}-\alpha+\frac{k^2}{r^2}\Big)+3({\partial_r\nu})^2+3\nu^2\Big(\frac{e}{de-c^2}-\alpha\Big) -\frac{2\sqrt{3}c\mu\nu}{de-c^2}\Big\} rdr\\
\nonumber  &= \int_0^R\left\{  \chi\left((u'')^2 +(u')^2 (\f{d}{de-c^2}-\al+\f{k^2}{r^2})\right) +3\eta\left( (v'')^2+(v')^2(\f{e}{de-c^2}-\al)\right) -\f{2\sqrt{3}cu'v'\chi\eta}{de-c^2}  \right\}rdr\\
\nonumber &=\int_0^R \left\{  \chi (u'')^2+\chi u'(u'''+\frac{u''}{r}-\frac{u'}{r^2}+\f{2k^2u}{r^3}+\f{\sqrt{3}cv'}{de-c^2}) \right.\\
\nonumber &\qquad\qquad \left. +3\eta(v'')^2+3\eta v'(v'''+\f{v''}{r}-\f{v'}{r^2}+\f{cu'}{\sqrt{3}(de-c^2)})-\f{2\sqrt{3}cu'v'\chi\eta}{de-c^2} \right\}rdr
\end{align}
Here in the last line of \eqref{mono:ibp} we used equations \eqref{3de:u} and \eqref{3de:v}. Then we try to perform integration by parts for the above integral. Because $\chi$ and $\eta$ are characteristic functions of union of intervals, we need to deal with the boundary term carefully. Denote
\beqo
\{r\in(0,R):\,p(r)<0\}=\bigcup\limits_{i=1}^\infty(a_{i1},a_{i2})
\eeqo
Now we proceed in two different cases according to the sign of $u'(R)$.
\begin{case}
\normalfont Suppose $u'(R)\geq 0$, then it holds that
\beqo
a_{ij}<R,\quad u'(a_{ij})u''(a_{ij})a_{ij}=0 \quad \text{for all } i\in \mathbb{N},\, j=1,2.
\eeqo
So by fundamental theorem of calculus we have
\beq\label{ibp:u1}
\int_0^R \chi\left((u'')^2r+u'u'''r+u'u''\right)dr=\int_0^R \chi(u'u''r)'\,dr=0.
\eeq
\end{case}
\begin{case}
\normalfont Suppose $u'(R)<0$, then $R=a_{i2}$ for some $i$. By fundamental theorem of calculus we get
\beq\label{ibp:u2}
\int_0^R \chi\left((u'')^2r+u'u'''r+u'u''\right)dr=\int_0^R \chi(u'u''r)'\,dr=u'(R)u''(R)R.
\eeq
Note that the final expression contains a boundary term at $R$. However, the sign of this term is know. Recall that $(u(R),v(R))=(\f{s_2}{2},-\f{s_2}{6})$ is a local minimizer of the singular bulk energy \eqref{energy:ms}. Therefore if $u'(R)<0$ at $R$, then $u$ will be slightly larger than $\f{s_2}{2}$ near $R$, which leads to $f(u,v)-\al u\geq 0$ due to the stability property. By equation \eqref{ODE:u}, we get $u''(R)>0$ and therefore $u'(R)u''(R)R<0$.
\end{case}
Combining the two cases above we conclude that
\beq\label{ibp:u}
\int_0^R \chi\left((u'')^2r+u'u'''r+u'u''\right)dr=\int_0^R \chi(u'u''r)'\,dr\leq 0
\eeq
By similar argument as above we can obtain the following relation for $v$:
\beq\label{ibp:v}
\int_0^R \eta\left((v'')^2r+v'v'''r+v'v''\right)dr=\int_0^R \eta(v'v''r)'\,dr\leq 0
\eeq
Substituting \eqref{ibp:u} and \eqref{ibp:v} into \eqref{mono:ibp} leads to
\begin{align*}
0&\leq \int_0^R\left\{  \left(-\chi \f{(u')^2}{r}+\f{2k^2uu'\chi}{r^2}\right)-3\eta\f{(v')^2}{r} +\sqrt{3}\f{cu'v'r}{de-c^2}(\chi-\eta)^2\right\}dr\\
&=\int_0^R\left\{ -\left( 3\eta\f{(v')^2}{r^2}+\chi\f{(u')^2}{r^2}\right)  +\f{2k^2p\chi}{r^2}+\sqrt{3} (\chi-\eta)^2pqr\right\}dr
\end{align*}

Note that
\begin{align*}
& -\int_0^R\left\{3\eta\f{(v')^2}{r^2}+\chi\f{(u')^2}{r^2}\right\} rdr<0,\\
&\int_0^R \sqrt{3}(\chi-\eta)^2pq rdr=\left(\int_{\{p<0,q<0\}}+\int_{\{p>0,q<0\}\bigcup\{p<0,q>0\}}
+\int_{\{p>0,q>0\}}\right) \sqrt{3}(\chi-\eta)^2pqrdr\\
&\qquad\qquad\qquad\qquad\qquad\qquad~ =0+\text{non-positive terms} +0\leq 0,\\
&\int_0^R \f{2k^2p\chi}{r^2}dr=\int_{\{p<0\}} \f{2p}{r^2}dr\leq 0,
\end{align*}
where we obtain a contradiction. Thus, we have
\beq
\nonumber uu'\geq 0,\quad \f{cv'}{u(de-c^2)}\geq 0.
\eeq

Since $u(0)=0<u(R)$, we know  $u'(r)\ge 0$ and therefore $u(r)>0$  for all $r\in(0,R)$ by strong maximum principle. By Corollary \ref{cor:ineq-f} and Lemma \ref{lem:ineq-c},
we have $c(r)<0$, which implies $v'(r)\leq 0$ for $r\in(0,R)$.
\par
Then we prove $u'(r)>0$ for $r>0$. Assume there exists $r_0\in(0,R)$ such that $u'(r_0)=0$, then we have $u''(r_0)=0$ since $u'(r)\geq 0$ on $(0,R)$. By (\ref{3de:u}), we obtain
\beq
\nonumber u'''(r_0)=-\frac{\sqrt3c}{de-c^2}v'(r_0)-\f{2k^2u}{r_0^3}<0,
\eeq
which contradicts with $u'(r)\geq 0$ on $(0,R)$.
\par
Similarly, assume there exists $r_1$ such that $v'(r_1)=0$ and then we also have $v''(r_1)=0$. By (\ref{3de:v}), we have
\beq
\nonumber v'''(r_1)=-\frac{c}{\sqrt{3}(de-c^2)}u'(r_1)>0
\eeq
which yields a contradiction again. Thus, we have proved $u'(r)>0,v'(r)<0$ on $(0,R)$.
\end{proof}

\subsection{Existence of solution to the ODE system on infinite domain}\label{subsec:exist-infinite}
Now we give the proof for the existence of solution of (\ref{ODE:u})-(\ref{ODE:v}) and some properties for the infinite domain case: $R=\infty$.
\begin{proposition}
There exists a solution $(u,v)\in C^{\infty}(0,\infty)\times C^\infty(0,\infty)$  satisfies the ODE system
(\ref{ODE:u})-(\ref{ODE:v}) and the boundary condition (\ref{BC:uv-R})-(\ref{BC:uv-0}). And $(u,v)$ also satisfies
\begin{equation}
\nonumber 0<u<\f{s_2}{2},\quad -\f{s_2}{6}<v<0,\quad u'>0,\quad v'<0
\end{equation}
on $(0,\infty)$. Furthermore, $(u,v)$ is a local minimizer of the reduced energy $\mathcal{E}$ in (\ref{energyuv}).
\end{proposition}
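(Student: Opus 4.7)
The plan is to construct the solution on $(0,\infty)$ as a limit of solutions on expanding disks, using the solutions $(u_n,v_n)$ produced by Proposition~\ref{prop:exist-finite} on $\mathbb{B}_{R_n}$ with $R_n \uparrow \infty$. The boundary values $u_n(R_n)=s_2/2$, $v_n(R_n)=-s_2/6$ are fixed, and by Proposition~\ref{prop:monotonity} each pair satisfies $u_n' > 0$, $v_n' < 0$ on $(0,R_n)$. This monotonicity together with the boundary data immediately yields the pointwise bounds $0 < u_n < s_2/2$ and $-s_2/6 < v_n < 0$ on $(0,R_n)$, so the whole sequence is uniformly bounded and stays in a fixed compact subset of $\mathcal{Q}_{phy}$. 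On this compact set the maps $f(u,v)$ and $g(u,v)$ are smooth and bounded (by Proposition~\ref{prop:bingham}(3)(4)), so the right-hand sides of (\ref{ODE:u})--(\ref{ODE:v}) are uniformly bounded on any set $[\delta,M]$ with $0<\delta<M<\infty$.

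Next I would extract a locally convergent subsequence. Fixing a compact interval $[\delta,M]\subset(0,\infty)$, standard interior elliptic/ODE regularity applied to (\ref{ODE:u})--(\ref{ODE:v}), together with the $L^\infty$ bounds above, yields uniform $C^{2,\alpha}_{\mathrm{loc}}$ bounds on $(u_n,v_n)$ on $[\delta,M]$. A diagonal Arzel\`a--Ascoli argument then produces a subsequence (not relabeled) converging in $C^2_{\mathrm{loc}}(0,\infty)$ to a pair $(u,v)\in C^2(0,\infty)\times C^2(0,\infty)$ which solves the ODE system on $(0,\infty)$; bootstrapping through the smoothness of $(f,g)$ gives $C^\infty$ regularity. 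The monotonicity $u'\geq 0$, $v'\leq 0$ passes to the limit, and strict monotonicity $u'>0$, $v'<0$ on $(0,\infty)$ is recovered by the same maximum principle argument used at the end of the proof of Proposition~\ref{prop:monotonity}: if $u'(r_0)=0$ at some interior point, the third-derivative identity (\ref{3de:u}) forces $u'''(r_0)<0$, contradicting $u'\geq 0$; analogously for $v'$.

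The boundary condition at $r=0$ is the main routine technical point, and I would handle it as in the finite domain case. The energy bound $\mathcal{E}(u_n,v_n)\leq \mathcal{E}(s_2/2,-s_2/6)$ (using the constant competitor, or the one from the finite-domain minimizer) gives a uniform bound for $\int_0^{R_n}(u_n^2/r)\,dr$ and $\int_0^{R_n}((u_n')^2+(v_n')^2)r\,dr$, which is preserved in the limit by Fatou's lemma. The argument at the end of Proposition~\ref{prop:exist-finite} using \cite[Proposition~2.2]{INSZ0} then gives $u(0)=0$, $v'(0)=0$, and the $C^2$ regularity of $u$ and smoothness of $v$ up to $r=0$.

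The most delicate step, and the one I would treat as the real obstacle, is the boundary condition at infinity: showing $u(r)\to s_2/2$ and $v(r)\to -s_2/6$ as $r\to\infty$. Since $u$ is increasing and bounded above by $s_2/2$, the limit $u_\infty := \lim_{r\to\infty} u(r)$ exists and lies in $(0,s_2/2]$; similarly $v_\infty := \lim_{r\to\infty} v(r) \in [-s_2/6,0)$ exists. Using the uniform interior regularity we get that $(u',v',u'',v'')\to 0$ along a subsequence $r_j\to\infty$ (otherwise monotonicity and boundedness are violated), and hence $(u_\infty,v_\infty)$ must satisfy the algebraic system $f(u_\infty,v_\infty)=\alpha u_\infty$, $g(u_\infty,v_\infty)=\alpha v_\infty$, i.e.\ $B_{Q_\infty}=\alpha Q_\infty$, where $Q_\infty=u_\infty F_1+v_\infty F_2$ is uniaxial along $\mathbf{e}^{(k)}_\varphi$ at infinity in the limit. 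By Proposition~\ref{prop:critical}, the only critical points of $\mathfrak{f}_S$ in uniaxial form correspond to $\eta\in\{0,\eta_1,\eta_2\}$, and by Proposition~\ref{prop:critical-stab} only $\eta_1$ (i.e.\ $s_2$) is stable for $\alpha>7.5$. To rule out $\eta=0$ and $\eta=\eta_2$, I would exploit the local minimizer property together with a cut-off competitor argument: if $(u_\infty,v_\infty)$ corresponded to an unstable critical point, one could construct a compactly supported perturbation in a large annulus decreasing the energy, contradicting the fact that $(u,v)$ inherits the local-minimizer property from the sequence $(u_n,v_n)$ (passing to the limit in the inequality $\mathcal{J}_n\geq 0$ via Fatou's lemma for the nonnegative terms and dominated convergence for the coefficients, which are uniformly bounded thanks to Lemma~\ref{lem:ineq-c} on the compact set in $\mathcal{Q}_{phy}$). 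This gives $(u_\infty,v_\infty)=(s_2/2,-s_2/6)$ and completes the verification of the boundary condition at $R=\infty$.
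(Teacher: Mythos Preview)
Your approach is essentially the same as the paper's: take a $C^2_{\mathrm{loc}}$ limit of the finite-domain minimizers via Arzel\`a--Ascoli, handle the origin as in Step~4 of Proposition~\ref{prop:exist-finite}, show the monotone limits at infinity satisfy the critical-point equations of $\mathfrak{f}_S$, and rule out the unstable critical points using the inherited local-minimizer property (the paper does this explicitly only for the isotropic limit, computing $d/(de-c^2)=15/2<\alpha$ at $u=v=0$). One caveat: the constant competitor $(s_2/2,-s_2/6)$ has infinite reduced energy on $(0,R_n)$ because $\int_0^{R_n}\frac{k^2u^2}{r}\,dr$ diverges, so to get the uniform energy bound you need a competitor that vanishes at the origin (e.g.\ a fixed profile interpolating from $0$ to the boundary data on $[0,1]$ and constant thereafter).
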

\begin{proof}
The proof is similar in spirit to the proof of \cite[Proposition 4.1]{INSZ2}, with necessary modifications for singular bulk potential. So we will briefly sketch the proof here and emphasize on solving the difficulty caused by implicit relation between $(f,g)$ and $(u,v)$.
We first construct a solution $(u,v)$ on $(0,+\infty)$. Let $(u_n,v_n)$ be the solution we construct in
Proposition \ref{prop:exist-finite} for $R=n$, and the define the sequence $(\bar{u}_n,\bar{v}_n)$ as
\begin{eqnarray}
\nonumber \bar{u}_n(r)=\left\{
                \begin{array}{ll}
                  u_n(r), & \hbox{$r\in(0,n)$;} \\
                  \f{s_2}{2}, & \hbox{$r\geq n$.}
                \end{array}
             \right.\\
\nonumber \bar{v}_n(r)=\left\{
                \begin{array}{ll}
                  v_n(r), & \hbox{$r\in(0,n)$;} \\
                  -\f{s_2}{6}, & \hbox{$r\geq n$.}
                \end{array}
             \right.
\end{eqnarray}
Since $(u_n,v_n)$ are uniformly bounded, $(\bar{u}_n,\bar{v}_n)$ are also uniformly bounded,
and $(f(\bar{u}_n,\bar{v}_n),g(\bar{u}_n,\bar{v}_n))$ are uniformly bounded due to Proposition \ref{prop:bingham}(3).
For any compact interval $K\in (0,+\infty)$, we can show that there exists $n_0$ such that $(\bar{u}_n,\bar{v}_n)_{n\geq n_0}$
are uniformly bounded in $C^3(I)$ by standard regularity argument of the ODE system (\ref{ODE:u})-(\ref{ODE:v}). Then by Arzela-Ascolis theorem, we deduce
that $\bar{u}_n(\bar{v}_n)$ converges (up to a subsequence) in $C_{loc}^2(0,+\infty) $ to some
$u_\infty(v_\infty)\in C^2_{loc}(0,\infty)$. And $(u_\infty,v_\infty)$ satisfies the ODE system and have the following properties on $(0,R)$:
\begin{equation}
\nonumber 0\leq u\leq \f{s_2}{2},\quad -\f{s_2}{6}\leq v\leq 0, \quad u'\geq 0,\quad v'\leq 0.
\end{equation}
\par

The boundary condition at the origin can be verified easily by similar arguments in Step 4 of
the proof of Proposition \ref{prop:exist-finite}. And by the local minimizing property of $(u_n,v_n)$,
we deduce that $(u_\infty,v_\infty)$ is minimizing in the following sense: on any compact interval
$\omega$,  $\mathcal{E}(u_\infty,v_\infty)\leq \mathcal{E}(u_\infty+\delta_1,v_\infty+\delta_2)$
for all $(\delta_1(r),\delta_2(r))\in C_c^\infty(\omega)\times C_c^\infty(\omega)$ with $v_\infty
+\delta_2\leq 0$. Next we show the behavior of $(u_\infty,v_\infty)$ at $+\infty$.

\par
Since $u_\infty$ and $v_\infty$ are both non-decreasing functions, there exists $s_\infty\in[0,\f{s_2}{2}]$ and $t_\infty\in[-\f{s_2}{6},0]$ such that
\beq
\nonumber \lim\limits_{r\rightarrow +\infty} u_\infty(r)=s_\infty,\qquad \lim\limits_{r\rightarrow +\infty} v_\infty(r)=t_\infty.
\eeq
We claim that $(s_\infty,t_\infty)$ satisfy that
\begin{align*}
f(s_\infty,t_\infty)&=\alpha s_\infty,\\
g(s_\infty,t_\infty)&=\alpha t_\infty.
\end{align*}
Otherwise, suppose $f(s_\infty,t_\infty)> \alpha s_\infty$, by (\ref{ODE:u}), we have that
\beq\label{urinfty}
\f{(u'_\infty r)'}{r}=\f{k^2}{r^2}u_\infty+f(u_\infty,v_\infty)-\alpha u.
\eeq
Then there exists $R_0>0$ and $c>0$ such that for all $r>R_0$, we have
\beq\label{urinfty2}
\f{(u'_\infty(r) r)'}{r}>c.
\eeq
We integrate the above inequality from $R_0$ to $R_1$ to obtain that
\beq
u'_\infty(R_1)R_1-u'_\infty(R_0)R_0>\f{c}{2}(R_1^2-R_0^2).
\eeq
Therefore there exist $R_2>0$ and $c_1>0$ such that for all $r>R_2$, $u'_\infty(r)>c_1$, which contradicts with the boundedness of $u_\infty$. Similarly, if $f(s_\infty,t_\infty)< \alpha s_\infty$ or $g(s_\infty,t_\infty)\neq \alpha t_\infty$, we can get contradiction by the same arguments.

As $\alpha>7.5$,  (\ref{eta}) have only two non-negative solutions which are $\eta$ and $0$, we only have two possible values for $(s_\infty,t_\infty)$, and we need to show $(s_\infty,t_\infty)=(\f{s_2}{2},-\f{s_2}{6})$. We assume by contradiction that $u_\infty\equiv 0,v_\infty\equiv 0$. We consider the formula (\ref{iv0,01}), according to the locally minimizing property of $(u_\infty,v_\infty)$, we have
\beq
I_{0,01}(0,\mu_0^1)=\int_0^\infty\left( (\partial_r \mu_0^{(1)})^2+(\mu_0^{(1)})^2(-\alpha+\frac{d}{de-c^2}+\frac{k^2}{r^2})\right)rdr\geq 0.
\eeq
When $u= v = 0$, we have $c= 0$ and $e=\frac{2}{15}$. Thus $\f{d}{de-c^2}=\f{15}{2}<\alpha$. Thus, there exists $r_0>0$ and $\epsilon>0$ such that
\beq
-\alpha+\f{d}{de-c^2}+\f{k^2}{r^2}<-\epsilon.
\eeq
Take $\psi\in C_c^\infty(0,1)$ and let $\psi_n(r)=\psi(\f{r}{n})$, then we have
\beq
\int_0^n (\partial_r \psi_n)^2-\epsilon \psi_n^2rdr=\int_0^1 (\f{\partial \psi}{\partial r})^2-\epsilon n^2\psi^2rdr.
\eeq
Hence there exists an integer $N$ such that for all $n>N$, the integral above is negative. And we can take
\beq
\nonumber \mu_0^{(1)}=\left\{
                        \begin{array}{ll}
                          \psi_{2N}(r-r_0), & r\in[r_0,r_0+2N] \\
                          0, & \hbox{else.}
                        \end{array}
                      \right.
\eeq
Then we have $I_{0,01}(0,\mu_0^{(1)})$ is negative which yields a contradiction. Therefore we have proved $u_\infty(+\infty)=\f{s_2}{2}$, $v_\infty(+\infty)=-\f{s_2}{6}$.
\par
Finally, the smoothness of $(u,v)$ and the strict monotonicity ($u_\infty'>0$ and $v_\infty'<0$) just follows from the same arguments in the finite domain case.
\end{proof}

\section{Reduction of the variational inequality}\label{sec:reduce}
Section \ref{sec:reduce}-\ref{sec:instability} are devoted to studying the stability and instability
of radial solution constructed in Section \ref{sec:exist}.

We firstly calculate the second variation of the free energy $\mathcal{F}(Q,\nabla Q)$ at the critical point $Q=u(r)F_1+v(r)F_2$.
According to Lemma 3.1 in \cite{GWZZ}, we know that  $C_c^\infty(\mathbb{B}_R\backslash\{0\})$is dense in $H^1(\mathbb{B}_R)$,
thus we assume the test function $V\in C_c^\infty(\mathbb{B}_R\backslash \{0\},\Qa) $ for $R\in(0,\infty]$.
\begin{align}
  I(V) =& \frac{d^2}{dt^2}\Big|_{t=0}\int_{\BR}\Big\{ \frac{1}{2}|\nabla(Q+tV)|^2-\frac{\alpha}{2}|Q+tV|^2+B_{Q+tV}:(Q+tV)-\ln{Z_{Q+tV}} \Big\}dx \\
 \nonumber =&  \int_{\BR} \Big\{|\nabla V|^2-\alpha|V|^2 +2\frac{\delta B}{\delta Q}(V):V +\frac{\delta^2 B}{\delta Q^2}(V, V):Q\Big\}dx-\frac{d^2}{dt^2}\Big|_{t=0}\int_{R^2}\ln{Z_{Q+tV}}dx\\
  \nonumber  =& \int_{\BR}\Big\{ |\nabla V|^2-\alpha|V|^2 +\frac{\delta B}{\delta Q}(V):V \Big\} dx.
\end{align}
It is not an easy work to calculate $\frac{\delta B}{\delta Q}(V)$ , as $B$ is an implicit function of $Q$ by the relation (\ref{relation:B-Q}).
To avoid this difficulty, we  let $B=B+tU$, and expand Q as $$Q=Q+tV+t^2V_2+\cdots.$$
 According to the existing result in \cite{HLW}, we have that
\begin{align}
V=&\langle\mm\mm\mm\mm\rangle_{\rho_Q}:U-(\frac{1}{3}I+Q)(Q:U),\\
V_2=&(Q:U)V+\frac{1}{2Z_Q}\int(\mm\mm-\frac{1}{3}I)(\mm\mm:U)^2\exp(B:\mm\mm)d\mm\\\nonumber
&-\frac{Q}{2Z_Q}\int(\mm\mm:U)^2\exp(B:\mm\mm)d\mm.
\end{align}
Then we calculate
\begin{align*}
  I(V) =& \frac{d^2}{dt^2}\Big|_{t=0}\int_{\BR} \Big\{\frac{1}{2}|\nabla(Q+tV+t^2V_2+...)|^2-\frac{\alpha}{2}|Q+tV+t^2V_2+...|^2\\
  \nonumber &+(B+tU):(Q+tV+t^2V_2+...)-\ln{\int \exp((B+tU):\mm\mm)d\mm} \Big\}dx \\
   \nonumber =&  \int_{\BR}\Big\{ |\nabla V|^2+2\nabla Q\cdot \nabla V_2-\alpha|V|^2-2\alpha Q:V_2 +2B:V_2+2V:U\\
  \nonumber &-\frac{1}{Z_Q}\int \exp(B:\mm\mm)(U:\mm\mm)^2d\mm+\Big(\frac{1}{Z_Q}\int \exp(B:\mm\mm)(U:\mm\mm)d\mm\Big)^2\Big\}dx.
\end{align*}
Since $B-\alpha Q=\Delta Q$, we have
\begin{equation*}
\int_{\BR}\Big( 2\nabla Q\cdot \nabla V_2-2\alpha Q:V_2 +2B:V_2\Big) dx=\int_{\BR}\Big( 2\nabla Q\cdot \nabla V_2+2\Delta Q:V_2\Big) dx=0.
\end{equation*}
And we also observe that
\begin{equation*}
\frac{1}{Z_Q}\int \exp(B:\mm\mm)(U:\mm\mm)^2d\mm-\Big(\frac{1}{Z_Q}\int \exp(B:\mm\mm)(U:\mm\mm)d\mm\Big)^2=V:U.
\end{equation*}
Thus, $I(V)$ can be written as
\begin{align*}
  I(V) =&  \int_{\BR}\Big\{ |\nabla V|^2-\alpha|V|^2+V:U\Big\}dx,
\end{align*}
where $U$ is determined by
\begin{align}
V=\CL_Q(U):=&\langle\mm\mm\mm\mm\rangle_{\rho_Q}:U-(\frac{1}{3}I+Q)(Q:U).
\end{align}
The following lemma implies the inverse of $\CL_Q$ on $\Qa$ is well-defined, and then we can write
\begin{align}\label{energy:U}
  I(V) =&  \int_{\BR}\Big\{ |\nabla V|^2-\alpha|V|^2+V:\CL_Q^{-1}(V)\Big\}dx.
\end{align}
\begin{lem}
For any $Q\in\Qp$, $\CL_Q$ is a self-adjoint, positive operator from $\Qa$ to $\Qa$. 
\end{lem}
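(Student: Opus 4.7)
The plan is to unpack the definition $\CL_Q(U) = \langle\mm\mm\mm\mm\rangle_{\rho_Q}:U - (\frac{1}{3}I+Q)(Q:U)$ and verify the three claims (well-definedness into $\Qa$, self-adjointness, and positivity) one by one by reducing them to identities for the moments $\langle \cdot\rangle_{\rho_Q}$.

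\medskip

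\noindent\textbf{Step 1: $\CL_Q$ maps $\Qa$ into $\Qa$.} Symmetry of $\CL_Q(U)$ as a matrix is immediate since $\langle\mm\mm\mm\mm\rangle_{\rho_Q}$ is symmetric in each pair of indices and $\frac{1}{3}I+Q$ is symmetric. For the trace I use $|\mm|^2=1$ and $U\in\Qa$ (so $I:U=0$): taking the trace of the first term gives $\langle|\mm|^2(\mm\mm:U)\rangle_{\rho_Q}=\langle\mm\mm:U\rangle_{\rho_Q}=Q:U$ by the Bingham relation \eqref{relation:B-Q}, while the second term contributes $(Q:U)\,\mathrm{tr}(\frac{1}{3}I+Q)=Q:U$. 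The difference vanishes.

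\medskip

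\noindent\textbf{Step 2: Self-adjointness.} For $U,V\in\Qa$ I compute, in the Frobenius pairing,
\begin{align*}
\CL_Q(U):V &= \langle (\mm\mm:U)(\mm\mm:V)\rangle_{\rho_Q} - (Q:U)\bigl[(\tfrac{1}{3}I+Q):V\bigr]\\
&= \langle(\mm\mm:U)(\mm\mm:V)\rangle_{\rho_Q} - (Q:U)(Q:V),
\end{align*}
where in the last step I used $I:V=0$. This expression is manifestly symmetric in $U$ and $V$, so $\CL_Q(U):V=\CL_Q(V):U$.

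\medskip

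\noindent\textbf{Step 3: Positivity.} Setting $V=U$ in the previous formula and using the Bingham relation to write $Q:U=\langle\mm\mm:U\rangle_{\rho_Q}$, I obtain
\begin{equation*}
\CL_Q(U):U \;=\; \bigl\langle (\mm\mm:U)^2\bigr\rangle_{\rho_Q} - \bigl\langle \mm\mm:U\bigr\rangle_{\rho_Q}^{\,2} \;=\; \mathrm{Var}_{\rho_Q}(\mm\mm:U)\;\ge\;0,
\end{equation*}
which is non-negative by the Cauchy--Schwarz (or Jensen) inequality. The only genuinely delicate point is the strict inequality: equality forces $\mm\mm:U$ to be $\rho_Q$-a.s.\ constant on $\BS$, and since $\rho_Q>0$ pointwise, this means $\mm\mm:U$ is constant on all of $\BS$. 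A quadratic form $\mm\mapsto \mm^{T}U\mm$ is constant on the unit sphere only if $U$ is a scalar multiple of $I$; combined with $\mathrm{tr}\,U=0$ this gives $U=0$.

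\medskip

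\noindent\textbf{Expected obstacle.} None of the three steps is deep; the only place one must be careful is the positivity step, where using $U\in\Qa$ (traceless) to rewrite $(\frac{1}{3}I+Q):U$ as $Q:U$ is exactly what collapses $\CL_Q(U):U$ into a variance. Once this rewriting is made, the conclusion is essentially a one-line consequence of Jensen's inequality and the strict positivity of $\rho_Q$ on $\BS$, and invertibility of $\CL_Q$ on $\Qa$ follows from positivity in finite dimension.
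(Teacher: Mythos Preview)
Your proof is correct and follows essentially the same route as the paper: both reduce $\CL_Q(U):V$ to $\langle(\mm\mm:U)(\mm\mm:V)\rangle_{\rho_Q}-(Q:U)(Q:V)$ and then recognize $\CL_Q(U):U$ as a variance (the paper writes this via the symmetrized double integral $\tfrac12\int_\BS\int_\BS(U:\mm\mm-U:\mm'\mm')^2\rho_Q(\mm)\rho_Q(\mm')\,d\mm\,d\mm'$). Your Step~1 and the explicit strictness argument in Step~3 add detail the paper leaves implicit, but the substance is the same.
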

\begin{proof}For $V_1, V_2\in \Qa$, we have
\begin{align*}
\CL_Q(V_1):V_2&= V_2:\Big((Q+\frac{1}{3}\II)\cdot{V_1}-V_1:M_Q^{(4)}\Big)\\
&=\int_\BS(V_1:\mm\mm)(V_2:\mm\mm)\rho_Q(\mm)\ud\mm-(V_1:Q)(V_2:Q).\nonumber
\end{align*}
In addition, by Cauchy-Schwartz inequality, it holds for $V_1\neq0$ that
\begin{align*}
\CL_Q(V_1):V_1&=\int_\BS(V_1:\mm\mm)^2\rho_Q(\mm)\ud\mm-(V_1:Q)^2\\
&=\frac12\int_\BS(V_1:\mm\mm-V_1:\mm'\mm')^2\rho_Q(\mm)\rho_Q(\mm')\ud\mm\ud\mm'>0.
\end{align*}
This gives rise to our lemma.
\end{proof}

As in \cite{GWZZ}, at any point $(r,\varphi)$, we write $V$ as a linear combination of the normal orthogonal basis $\{E_i\}$:
\begin{align}
V=\sum\limits_{i=0}^4 w_i(r,\varphi) E_i,
\end{align}
where $w_i\in C_c^\infty(\mathbb{B}_R\backslash \{0\})$, and $\{E_i\}$ is defined by
\begin{eqnarray}\label{def:ei}
  E_0&=&\sqrt{\frac{3}{2}}\left(
                                \begin{array}{ccc}
                                  -\frac{1}{3} & 0 & 0 \\
                                  0 & -\frac{1}{3} & 0 \\
                                  0 & 0 & \frac{2}{3} \\
                                \end{array}
                              \right)=\frac{1}{\sqrt{6}}F_2,
\\
  \nonumber E_1&=&\frac{1}{\sqrt{2}}\left(
        \begin{array}{ccc}
          \cos \varphi & \sin \varphi & 0 \\
          \sin \varphi & -\cos \varphi & 0 \\
          0 & 0 & 0 \\
        \end{array}
      \right)=\frac{1}{\sqrt{2}}F_1,\\
\nonumber E_2&=&
        \frac{1}{\sqrt{2}}\left(
        \begin{array}{ccc}
          -\sin \varphi & \cos \varphi & 0 \\
          \cos \varphi & \sin \varphi & 0 \\
          0 & 0 & 0 \\
        \end{array}
      \right),\\
\nonumber E_3&=&
\frac{1}{\sqrt{2}}\left(
  \begin{array}{ccc}
    0 & 0 & 1 \\
    0 & 0 & 0 \\
    1 & 0 & 0 \\
  \end{array}
\right),\qquad E_4 =
\frac{1}{\sqrt{2}}\left(
  \begin{array}{ccc}
    0 & 0 & 0 \\
    0 & 0 & 1 \\
    0 & 1 & 0 \\
  \end{array}
\right).
\end{eqnarray}

It is easy to check that for $Q=u(r)F_1+v(r)F_2$ it holds
\begin{align}
\CL_Q(E_i):E_j=0,\qquad \text{for }i=0,1,2;~~ j=3,4.
\end{align}
Thus, we also have $\CL_Q^{-1}(E_i):E_j=0$  for $i=0,1,2$ and $j=3,4$.
Therefore, $I(V)$ can be completely divided into two parts:
 $$I(V)= I( w_0 E_0+w_1E_1+w_2E_2)+I( w_3 E_3+w_4E_4)\triangleq I^A+I^B.$$
Then, to prove the stability is equivalent to show the non-negativity of both  $I^A$ and $I^B$.

\medskip

Now we will deduce some integral identities which would play an important role in the following proof.
In the sequel, we assume $\eta\in C_c^\infty(0,R)$ with $R\in (0,+\infty]$. According to (\ref{ODE:u})-(\ref{ODE:v}), we have
\begin{align}\nonumber
\mathcal{A}(\eta):=&\int_{0}^R \Big\{ (v\eta)_{r}^2+(\frac{a+b}{2ab}-\alpha)(v\eta)^2\Big\}rdr\\\nonumber
=&\int_{0}^R \Big\{ (v\eta)_{r}^2+(v''+\frac{v'}{r}-\frac{(a-b)u}{6ab})v\eta^2\Big\}rdr\\\nonumber
\nonumber =&(vv'\eta^2r)|_0^{R}+\int_0^{R} (v^2\eta_r^2-\frac{(a-b)uv}{6ab}\eta^2)rdr\\
=&\int_{0}^R \Big\{ (v\eta_{r})^2-\frac{(a-b)uv}{6ab}\eta^2\Big\}rdr,\label{eq:A}
\end{align}
and
\begin{align}
\nonumber \mathcal{B}(\eta):=&\int_{0}^R \Big\{ (u\eta)_{r}^2+(\frac{a+b}{2ab}-\alpha+\frac{3(a-b)v}{2abu}+\frac{k^2}{r^2})(u\eta)^2\Big\}rdr\\
  \nonumber =&\int_0^{R} [(\eta'u+u'\eta)^2+u\eta^2(u''+\frac{u'}{r})]rdr\\
  \nonumber =&(uu'\eta^2r)|_0^{R} +\int_0^{R} u^2\eta_r^2rdr\\
 =&\int_{0}^R (u\eta_{r})^2rdr.\label{eq:B}
\end{align}
On the other hand, from the equations (\ref{3de:u})-(\ref{3de:v}), we have
\begin{align}
\mathcal{C}(\eta)&:=\int_0^{R}\Big\{(v'\eta)_r^2 +(v'\eta)^2(\frac{e}{de-c^2}-\alpha)\Big\} rdr\nonumber\\
&=\int_0^{R}\Big\{(v''\eta+v'\eta')^2 +
v'\eta^2(v'''+\frac{v''}{r}-\frac{v'}{r^2}+\frac{c}{\sqrt{3}(de-c^2)}u')\Big\} rdr\nonumber\\
\nonumber
&=\int_0^{R}\Big\{(v'\eta')^2 -\frac{(v'\eta)^2}{r^2}
+\frac{c u'v'}{\sqrt{3}(de-c^2)}\eta^2\Big\} rdr+(rv''v'\eta^2)\big|_{0}^{R}\\
&=\int_0^{R}\Big\{(v'\eta')^2 -\frac{(v'\eta)^2}{r^2}
+\frac{c u'v'}{\sqrt{3}(de-c^2)}\eta^2\Big\} rdr,\label{eq:C}
\end{align}
and
\begin{align}
\mathcal{D}(\eta)&:=\int_0^{R}\Big\{(u'\eta)_r^2 +
(u'\eta)^2(\frac{d}{de-c^2}-\alpha+\frac{k^2}{r^2})\Big\} rdr\nonumber\\
&=\int_0^{R}\Big\{(u''\eta+u'\eta')^2 +
u'\eta^2\Big(u'''+\frac{u''}{r}-\frac{u'}{r^2}+\frac{2u}{r^3}+\frac{\sqrt3c}{de-c^2}v'\Big)\Big\} rdr\nonumber\\\nonumber
&=\int_0^{R}\Big\{(u'\eta')^2 -
\frac{(u'\eta)^2}{r^2}+{\frac{2uu'\eta^2}{r^3}}+\frac{\sqrt3cu'v'}{de-c^2}\eta^2\Big\} rdr+(ru''u'\eta^2)\big|_{0}^{R}\\
&=\int_0^{R}\Big\{(u'\eta')^2 -
\frac{(u'\eta)^2}{r^2}+{\frac{2uu'\eta^2}{r^3}}+\frac{\sqrt3cu'v'}{de-c^2}\eta^2\Big\} rdr.\label{eq:D}
\end{align}
Moreover, we also have that
\begin{align}\nonumber
\mathcal{G}(\eta):=&\int_{0}^R \Big\{\Big(\frac{u\eta}{r}\Big)_{r}^2
+\Big(\frac{k^2}{r^2}-\alpha+\frac{1}{h}\Big)\frac{(u\eta)^2}{r^2}\Big\}rdr\\
  \nonumber =&\int_0^{R}\Big\{\Big(\frac{u\eta'}{r}+\big(\frac{u}{r}\big)'\eta\Big)^2
  +\frac{u\eta^2}{r^2}\Big(u''+\frac{u'}{r}\Big)\Big\}rdr\\\nonumber
=&\int_{0}^R\Big\{\Big(\frac{u}{r}\eta_{r}\Big)^2
+\frac{\eta^2}{r^4}(2ruu'-u^2)\Big\}rdr+\Big(\frac{u}{r}\Big)'u\eta^2\big|_0^R\\
=&\int_{0}^R\Big\{
\Big(\frac{u}{r}\eta_{r}\Big)^2+\frac{\eta^2}{r^4}(2ruu'-u^2)\Big\}rdr.\label{eq:E}
\end{align}

\section{Stability of radial symmetric solutions with degree $|k|=1$}\label{sec:stability}
In this section, we will prove Theorem \ref{thm:stability}. By the analysis in the previous section, it suffices to prove $I^A\ge0$ and $I^B\ge 0$ separatively.
This will be accomplished in the following two subsections.
\subsection{Non-negativity of $I^B$}

For any $\varphi\in[0,2\pi)$, let $\te_1=\cos\frac{\varphi}{2}\ee_1+\sin\frac{\varphi}{2}\ee_2, \te_2=\sin\frac{\varphi}{2}\ee_1-\cos\frac{\varphi}{2}\ee_2$, and
\begin{align*}
 &\tilde E_3=\cos\frac{\varphi}{2}E_3+\sin\frac{\varphi}{2}E_4=\frac1{\sqrt{2}}(\te_1\otimes \ee_3+\ee_3\otimes\te_1),\\
 & \tilde E_4=\sin\frac{\varphi}{2}E_3-\cos\frac{\varphi}{2}E_4=\frac1{\sqrt{2}}(\te_2\otimes \ee_3+\ee_3\otimes\te_2).
\end{align*}
\begin{lemma}
For $Q(r,\varphi)=u(r)F_1(\varphi)+v(r)F_2$, it hold that
\begin{align}\label{rela34}
 \CL_Q\tilde E_3:\tilde E_3=a>0,\quad \CL_Q\tilde E_3:\tilde E_4=0,\quad    \CL_Q\tilde E_4:\tilde E_4=b>0.
\end{align}
Here $a,b$ are the quantities defined in (\ref{def:abh}).
\end{lemma}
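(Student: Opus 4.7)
The plan is to compute each inner product directly using the bilinear form identity already established in the proof of the positivity of $\CL_Q$, namely
\begin{equation*}
\CL_Q(V_1):V_2 = \int_\BS (V_1:\mm\mm)(V_2:\mm\mm)\rho_Q(\mm)\,d\mm - (V_1:Q)(V_2:Q),
\end{equation*}
combined with the observation that $Q=uF_1+vF_2$ has $\{\te_1,\te_2,\ee_3\}$ as an orthonormal eigenframe. Specifically, a short calculation using the definitions of $F_1,F_2$ shows that $Q\te_1=(u-v)\te_1$, $Q\te_2=-(u+v)\te_2$, and $Q\ee_3=2v\,\ee_3$; consequently, since $B_Q=fF_1+gF_2$ shares these eigenvectors, one also obtains $B_Q:\mm\mm=f(\tilde m_1^2-\tilde m_2^2)+g(2\tilde m_3^2-\tilde m_1^2-\tilde m_2^2)$ where $\tilde m_i:=\te_i\cdot\mm$ for $i=1,2$ and $\tilde m_3:=m_3$. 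Hence $\rho_Q(\mm)$ depends only on the squares $\tilde m_1^2,\tilde m_2^2,\tilde m_3^2$.

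The first step is to evaluate the scalar products with $Q$. Because $\ee_3\cdot\te_1=\ee_3\cdot\te_2=0$ and $Q\ee_3=2v\ee_3$, the cross terms in $\tilde E_3=\frac{1}{\sqrt 2}(\te_1\otimes\ee_3+\ee_3\otimes\te_1)$ give $\tilde E_3:Q=\sqrt{2}\,\te_1^{T}Q\ee_3=0$, and similarly $\tilde E_4:Q=0$. Thus the second term in the bilinear identity drops out entirely for our three inner products. The second step is to compute the contractions with $\mm\mm$: a direct expansion yields $\tilde E_3:\mm\mm=\sqrt 2\,\tilde m_1\tilde m_3$ and $\tilde E_4:\mm\mm=\sqrt 2\,\tilde m_2\tilde m_3$.

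Plugging these into the bilinear identity and changing variables to the orthonormal frame $\{\te_1,\te_2,\ee_3\}$ (under which the integration measure on $\BS$ is invariant) gives
\begin{align*}
\CL_Q(\tilde E_3):\tilde E_3 &= 2\int_\BS \tilde m_1^2\tilde m_3^2\,\rho_Q\,d\mm = 2\langle m_1^2m_3^2\rangle_{f,g}=a,\\
\CL_Q(\tilde E_4):\tilde E_4 &= 2\int_\BS \tilde m_2^2\tilde m_3^2\,\rho_Q\,d\mm = 2\langle m_2^2m_3^2\rangle_{f,g}=b,\\
\CL_Q(\tilde E_3):\tilde E_4 &= 2\int_\BS \tilde m_1\tilde m_2\tilde m_3^2\,\rho_Q\,d\mm=0,
\end{align*}
where the last integral vanishes by a parity argument: $\rho_Q$ is even in each of $\tilde m_1$ and $\tilde m_2$ separately, while the integrand $\tilde m_1\tilde m_2\tilde m_3^2$ is odd in each. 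Positivity of $a$ and $b$ follows because the integrands $\tilde m_1^2\tilde m_3^2$ and $\tilde m_2^2\tilde m_3^2$ are nonnegative and positive on sets of full measure, while $\rho_Q>0$ everywhere on $\BS$.

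The only mildly delicate point is verifying that $\{\te_1,\te_2,\ee_3\}$ diagonalizes $Q$ for the $k=\pm 1$ prefactor in the definition of $\te_1,\te_2$, so that the Bingham relation (\ref{bform}) can be recast in terms of $\tilde m_i$; after this bookkeeping, each claim reduces to evaluating a single integral that is already named $a$ or $b$ in (\ref{def:abh}), or a parity-odd integral that is trivially zero.
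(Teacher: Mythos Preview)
Your proof is correct and follows essentially the same approach as the paper: change to the rotated orthonormal frame $\{\te_1,\te_2,\ee_3\}$ diagonalizing $Q$, express $\rho_Q$ in the $\tilde m_i$ coordinates, and reduce each contraction to the defining integrals for $a$, $b$, and a parity-odd term. The paper's proof is extremely terse (it writes only the resulting integral for $\CL_Q\tilde E_3:\tilde E_3$ and says the rest is ``similar''), whereas you spell out the vanishing of the $(V:Q)$ contributions, the explicit contractions $\tilde E_i:\mm\mm$, and the parity argument for the cross term; these are exactly the details hidden behind the paper's ``direct to verify''.
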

\begin{proof}Let $\tilde\mm=(\tilde m_1, \tilde m_2, \tilde m_3):=(\mm\cdot\te_1, \mm\cdot\te_2, \mm\cdot\ee_3)$. Then it is direct to verify that
\begin{align*}
  \CL_Q\tilde E_3:\tilde E_3=\frac{2\int_\BS \tilde m_1^2\tilde m_3^2
 \exp({f(\tilde m_1^2-\tilde m_2^2)+g(2\tilde m_3^2-\tilde m_1^2-\tilde m_2^2)})d\tilde\mm}{\int_{\mathbb{S}^2}
 \exp({f(\tilde m_1^2-\tilde m_2^2)+g(2\tilde m_3^2-\tilde m_1^2-\tilde m_2^2)})d\mm}=a>0.
\end{align*}
The other two equalities can be obtained similarly.
\end{proof}

From  (\ref{rela34}), we have the following identity:
\begin{equation*}
  \left(
    \begin{array}{ccc}
      \frac{\sin{\varphi}}{2} & -\cos{\varphi} & -\frac{\sin{\varphi}}{2} \\
      \cos^2{\frac{\varphi}{2}} & \sin{\varphi} & \sin^2{\frac{\varphi}{2}} \\
      \sin^2{\frac{\varphi}{2}} & -\sin{\varphi} & \cos^2{\frac{\varphi}{2}} \\
    \end{array}
    \right)
    \left(
      \begin{array}{c}
        \CL_Q E_3: E_3 \\
        \CL_Q E_3: E_4 \\
        \CL_Q E_4: E_4 \\
      \end{array}
    \right)
    =
    \left(
      \begin{array}{c}
        0 \\
        a \\
        b \\
      \end{array}
    \right)
\end{equation*}
Direct computation gives that
\begin{equation*}
\CL_Q
    \left(
      \begin{array}{c}
        E_3 \\
        E_4 \\
      \end{array}
    \right)=  \frac12\left(
    \begin{array}{ccc}
      a+b+(a-b)\cos\varphi & (a-b)\sin \varphi  \\
      (a-b)\sin \varphi & a+b-(a-b)\cos\varphi \\
    \end{array}
    \right)
    \left(
      \begin{array}{c}
         E_3 \\
        E_4 \\
      \end{array}
    \right),\nonumber
\end{equation*}
and hence
\begin{equation*}
\CL_Q^{-1}
    \left(
      \begin{array}{c}
        E_3 \\
        E_4 \\
      \end{array}
    \right)=  \frac1{2ab}\left(
    \begin{array}{ccc}
      a+b-(a-b)\cos\varphi & -(a-b)\sin \varphi  \\
      -(a-b)\sin \varphi & a+b+(a-b)\cos\varphi \\
    \end{array}
    \right)
    \left(
      \begin{array}{c}
         E_3 \\
        E_4 \\
      \end{array}
    \right).
\end{equation*}
 Therefore, by the definition of $I^B$, we get
\begin{eqnarray}
I^B&=&\int_0^{2\pi}\int_0^R\Big\{  \sum\limits_{i=3,4}\Big(({\partial_r w_i})^2 +\frac{1}{r^2}(\partial_\varphi w_i)^2\Big)-\alpha(w_3^2+w_4^2) \\ &&\qquad+\frac{a+b}{2ab}(w_3^2+w_4^2)-\frac{a-b}{2ab}\big((w_3^2-w_4^2)\cos\varphi+2w_3w_4\sin\varphi\big)\Big\}rdrd\varphi.\nonumber
\end{eqnarray}

As in \cite{PWZZ,GWZZ}, we introduce a complex function $z=w_3+iw_4$. Then we have
\begin{align}\label{Ib}
I^B
=&\int_0^{2\pi}\int_0^R\Big\{  |\partial_r z|^2 +\frac{1}{r^2}|\partial_\varphi z|^2-\alpha|z|^2 +\frac{a+b}{2ab}|z|^2-\frac{a-b}{2ab}Re(e^{-i\varphi}z^2)\Big\}rdrd\varphi.
\end{align}
Assume that
\begin{equation}
z(r,\varphi)=\sum\limits_{m=-\infty}^{+\infty}z_m(r)e^{im\varphi},\nonumber
\end{equation}
then
\begin{align}
  \nonumber Re(e^{-i\varphi}z^2) =& Re\big(e^{-i\varphi}\sum\limits_{l,m=-\infty}^{+\infty}e^{(m+l)\varphi}z_mz_l\big)
   = Re\big(\sum\limits_{l,m=-\infty}^{+\infty}e^{i(m+l-1)\varphi}z_mz_l\big).
\end{align}
 Substituting it into (\ref{Ib}), we get
\begin{align*}
I^B(z)=2\pi\int_0^{R}&\Big\{\sum\limits_{m=-\infty}^{+\infty}\Big[|\pa_rz_m|^2
+\f{m^2}{r^2}|z_m|^2+(\frac{a+b}{2ab}-\alpha)|z_m|^2\Big]-\frac{a-b}{2ab}\sum\limits_{m+l=1}Re(z_mz_l)\Big\}rdr.
\end{align*}
We can write
\begin{align*}
I^B(z)=2\pi\sum\limits_{m=1}^{\infty}M_m,
\end{align*}
where
\begin{align}\nonumber
M_m&~=\int_0^R\Big\{|\pa_rz_m|^2+|\pa_rz_{1-m}|^2+\frac1{r^2}(m^2|z_m|^2+(1-m)^2|z_{1-m}|^2)\\
&\qquad\qquad+\Big(\frac{a+b}{2ab}-\alpha\Big)(|z_m|^2+|z_{1-m}|^2)-\frac{a-b}{ab}Re(z_mz_{1-m})\Big\}rdr.\nonumber
\end{align}
Noticing that $m^2\ge 1,(1-m)^2\ge0$ for $m\ge0$, and using the following simple relations
\begin{eqnarray}
 Re(z_mz_{1-m})\le |z_mz_{1-m}|,\quad |\pa_rz_m|^2 \geq (\pa_r|z_m|)^2, \quad |\pa_rz_{1-m}|^2 \geq (\pa_r|z_{1-m}|)^2,\nonumber
\end{eqnarray}
we conclude that
\begin{align*}
M_m&\ge \int_0^R\Big((\pa_r|z_m|)^2+(\pa_r|z_{1-m}|)^2+\frac1{r^2}|z_{m}|^2\\
&\qquad\qquad+\Big(\frac{a+b}{2ab}-\alpha\Big)(|z_m|^2+|z_{1-m}|^2)-\frac{a-b}{ab}|z_m||z_{1-m}|\Big) rdr.
\end{align*}
Thus, we only need to show that for any $q_0 ,q_1\in C_c^\infty((0,\infty),\mathbb{R})$,
\begin{equation}\label{ibq}
\tilde{I}(q_0,q_1)\triangleq \int_0^{R}\Big\{(\pa_r q_0)^2+(\pa_r q_1)^2
+\frac{q_1^2}{r^2}+\Big(\frac{a+b}{2ab}-\alpha\Big)(q_0^2+q_1^2)-\frac{a-b}{ab}q_0q_1\Big\}rdr> 0.
\end{equation}
Let $\eta=q_1/u$ and $\zeta=q_0/v$. Then $\eta,\zeta\in C_c^{\infty}((0,R))$. When $|k|=1$,  it is straightforward to obtain
\begin{align}\label{ineq:B}\nonumber
\tilde{I}(q_0,q_1)& = \mathcal{A}(\zeta)+\mathcal{B}(\eta)-\int_0^{R}\Big\{\frac{3(a-b)v}{2abu}(u\eta)^2+\frac{(a-b)uv}{ab}\zeta\eta\Big\}rdr\\
&=\int_0^{R}\Big\{(u\eta')^2+(v\zeta')^2-\frac{a-b}{6ab}vu(3\eta+\zeta)^2\Big\} rdr,
\end{align}
which is non-negative since $v<0$ and $(a-b)u>0$.

\subsection{Non-negativity of $I^A$}
To estimate $I^A$, we need to calculate $\CL_Q^{-1}(E_i):E_j$ for $i,j=0,1,2$. For this, we let
\begin{align}
M_{ij}=\CL_Q(E_i):E_j=\int_{\BS}(E_i:\mm\mm)(E_j:\mm\mm)f_Q(\mm)d\mm
-(Q:E_i)(Q:E_j).
\end{align}
Easy calculation shows that $M_{02}=M_{12}=M_{20}=M_{21}=0$,
$M_{01}=M_{10}=c$, $M_{00}=d$, $M_{11}=e$, $M_{22}=h$. Here $c,d,e,h$ are
 defined in (\ref{def:abh})-(\ref{def:e}).
Then we have
\begin{equation}
  M=\left(
           \begin{array}{ccc}
             d &  c & 0 \\
             c  & e & 0 \\
             0 & 0 & h \\
           \end{array}
         \right),\quad\text{ and }
  M^{-1}=\left(
           \begin{array}{ccc}
             \frac{e}{de-c^2} &  -\frac{c}{de-c^2} & 0 \\
             -\frac{c}{de-c^2}  & \frac{d}{de-c^2} & 0 \\
             0 & 0 & \frac{1}{h} \\
           \end{array}
         \right).
\end{equation}
Therefore, we obtain
\begin{align}
I^A(w_0,w_1,w_2) =&  \int_0^{R}\int_0^{2\pi}\Big\{w_{0r}^2+w_{1r}^2+w_{2r}^2
+\frac{1}{r^2}[w_{0\varphi}^2+(w_2-w_{1\varphi})^2+(w_1+w_{2\varphi})^2]\\  \nonumber
&-\alpha(w_0^2+w_1^2+w_2^2)+\Big(\frac{ew_0^2}{de-c^2}-\frac{2cw_0w_1}{de-c^2}
+\frac{dw_1^2}{de-c^2}+\frac{w_2^2}{h}\Big)\Big\}
rdrd\varphi.
\end{align}

Expanding $w_i(r,\varphi)$ as
\begin{equation}
   w_i(r,\varphi)=\sum\limits_{n=0}^{\infty}\Big(
\mu_n^{(i)}(r)\cos{n\varphi}+\nu_n^{(i)}\sin{n\varphi}\Big),
\end{equation}
where all $\mu_n^{(i)}$ and $\nu_n^{(i)}$ belongs to $C_c^\infty(0,R)$, we can decompose $I^A(w_0,w_1,w_2)$ as
\begin{equation}
 I^A(w_0,w_1,w_2)=I_{0,01}^A+I_{0,2}^A+\sum\limits_{n=1}^{\infty} I_{n}^A,
\end{equation}
where
\begin{align}\label{iv0,01}
  I_{0,01}^A= &~2\pi \int_0^{R}\Big\{(\partial_r \mu_0^{(0)})^2
+(\mu_0^{(0)})^2\Big(-\alpha+\frac{e}{de-c^2}\Big)+(\partial_r
\mu_0^{(1)})^2\\
  \nonumber &~\qquad+(\mu_0^{(1)})^2\Big(-\alpha+\frac{d}{de-c^2}+\frac{1}{r^2}\Big)
-\frac{2c}{de-c^2}\mu_0^{(0)}\mu_0^{(1)} \Big\}rdr,\\
\label{iv0,2}
  I_{0,2}^A=&~2\pi\int_0^R \Big\{\big(\partial_r \mu_0^{(2)}\big)^2+
(\mu_0^{(2)})^2\Big(\frac{1}{r^2}-\alpha+\frac{1}{h}\Big)\Big\}rdr,\\
\label{ivn,012}
  I_{n}^A=&~\pi\int_0^{R}\bigg\{\sum\limits_{i=0}^2\Big[(\partial_r
\mu_n^{(i)})^2+(\partial_r \nu_n^{(i)})^2++\frac{n^2}{r^2}\big((\mu_n^{(i)})^2+(\nu_n^{(i)})^2\big)\Big]
+\frac{4n}{r^2}(\mu_n^1\nu_n^2-\mu_n^2\nu_n^1)\\\nonumber
&~\qquad+((\mu_n^{(0)})^2+(\nu_n^{(0)})^2)\Big(\frac{e}{de-c^2}-\alpha\Big)
+((\mu_n^{(1)})^2+(\nu_n^{(1)})^2)\Big(\frac{d}{de-c^2}-\alpha+\frac{1}{r^2}\Big)\\\nonumber
&~\qquad+((\mu_n^{(2)})^2+(\nu_n^{(2)})^2)\Big(\frac{1}{r^2}-\alpha+\frac{1}{h}\Big)
-\frac{2c}{de-c^2}(\mu_n^{(0)}\mu_n^{(1)}+\nu_n^{(0)}\nu_n^{(1)})\bigg\}rdr.
 \end{align}

\par
If we take $\mu_0^{(0)}=\sqrt{3}\nu,\, \mu_0^{(1)}=\mu$, then the first term $I_{0,01}^A$ is just the second variation of the
reduced energy $\mathcal{E}$ at the critical point $(u,v)$ (see \eqref{def:min-1d}). Thus, the
locally minimizing property of the solution $(u,v)$
implies the non-negativity of $I_{0,01}^A$.

For $I_{0,2}$, we recall the fact $u=fh$ from (\ref{relation:uf}) and  take $\eta=\f{\mu_0^{(2)}}{u}\in C_c^\infty(0,R)$, we obtain
\begin{align}\nonumber
I_{0,2}^A=&~\int_{0}^R \Big\{
(u\eta)_{r}^2+(\frac{f}{u}-\alpha+\frac{1}{r^2})(u\eta)^2\Big\}rdr\\
\nonumber =&~\mathcal{B}(\eta)=\int_{0}^\infty (u\eta_{r})^2rdr\geq 0.
\end{align}
\par
 Thus, we only need to prove
$I_n^A\geq 0$ for $n\ge 1$, which is the result of the following proposition.
\begin{proposition}
For any $\mu_0,\nu_0,\mu_1,\nu_1,\mu_2,\nu_2$, we have
\begin{align*}
  &I_{n}^A(\mu_0,\nu_0,\mu_1,\nu_1,\mu_2,\nu_2)\\\nonumber
  :=&~\pi\int_0^{R}\bigg\{\sum\limits_{i=0}^2\Big((\partial_r
\mu_i)^2+(\partial_r \nu_i)^2++\frac{n^2}{r^2}(\mu_i^2+\nu_i^2)\Big)
+\frac{4n}{r^2}(\mu_1\nu_2-\mu_2\nu_1)\\\nonumber
&~\qquad+(\mu_0^2+\nu_0^2)\Big(\frac{e}{de-c^2}-\alpha\Big)
+(\mu_1^2+\nu_1^2)\Big(\frac{d}{de-c^2}-\alpha+\frac{1}{r^2}\Big)\\\nonumber
&~\qquad+(\mu_2^2+\nu_2^2)\Big(\frac{1}{r^2}-\alpha+\frac{1}{h}\Big)
-\frac{2c}{de-c^2}(\mu_0\mu_1+\nu_0\nu_1)\bigg\}rdr\ge 0.
 \end{align*}
\end{proposition}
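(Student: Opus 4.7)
The plan is to reduce $I_n^A$ to a sum of non-negative squares using the identities of Section~\ref{sec:reduce}. First, observe that the only $\mu_i\nu_j$-interaction in $I_n^A$ is the term $\frac{4n}{r^2}(\mu_1\nu_2-\mu_2\nu_1)$, while the $\mu_0\mu_1$ and $\nu_0\nu_1$ couplings are separable. Consequently $I_n^A$ splits as a sum of two quadratic forms, one on $(\mu_0,\mu_1,\nu_2)$ and one on $(\nu_0,\nu_1,\mu_2)$; a relabeling together with a single sign flip (say $\mu_2\mapsto-\mu_2$) maps the second onto the first. It therefore suffices to show that the first piece, which I denote $J(\mu_0,\mu_1,\nu_2)$, is non-negative.

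By Proposition~\ref{prop:monotonity} we have $u,u'>0$ and $v'<0$ on $(0,R)$, so I may substitute
\[
\mu_0 = v'\zeta_0,\qquad \mu_1 = u'\zeta_1,\qquad \nu_2 = u\xi,
\]
with $\zeta_0,\zeta_1,\xi\in C_c^\infty(0,R)$. Each ``diagonal'' contribution of $\mu_0,\mu_1,\nu_2$ (derivative squared plus the non-angular potential, including the $\frac{1}{r^2}$-pieces of the potentials of $\mu_1,\nu_2$, which correspond to the $k^2=1$ terms in the identities) matches respectively $\mathcal{C}(\zeta_0),\mathcal{D}(\zeta_1),\mathcal{B}(\xi)$. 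Using the second (manifestly lower-order) forms of \eqref{eq:B}--\eqref{eq:D}, the remainder of $J$ splits into three pieces: (i) the extra angular kinetic $\frac{n^2}{r^2}$ contributions, which combine with the $-\frac{1}{r^2}$ residuals of $\mathcal{C},\mathcal{D}$ into non-negative $\frac{n^2-1}{r^2}$-terms for $n\ge1$; (ii) the $\mu_0\mu_1$-coupling together with the $cu'v'\zeta_0^2$ and $cu'v'\zeta_1^2$ residuals of $\mathcal{C},\mathcal{D}$, whose discriminant vanishes on inspection, producing the perfect square
\[
\frac{cu'v'}{\sqrt{3}(de-c^2)}\bigl(\zeta_0-\sqrt{3}\zeta_1\bigr)^2,
\]
non-negative by Lemma~\ref{lem:ineq-c} ($c<0$) and $u'v'<0$; and (iii) a coupled $(\zeta_1,\xi)$-block built from $\frac{4n}{r^2}\mu_1\nu_2$, the residual $\frac{2uu'}{r^3}\zeta_1^2$ of $\mathcal{D}$, the angular kinetic $\frac{n^2u^2}{r^2}\xi^2$, and the derivative square $u^2\xi'^2\,r$ coming from $\mathcal{B}(\xi)$.

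The main obstacle is piece (iii). Explicitly the block is
\[
\int_0^R\Bigl\{\frac{2uu'}{r^2}\zeta_1^2 + \frac{4nuu'}{r}\zeta_1\xi + \frac{n^2u^2}{r}\xi^2\Bigr\}\,dr + \int_0^R u^2\xi'^2\,r\,dr,
\]
and the first integrand is pointwise indefinite in $(\zeta_1,\xi)$ near $r=0$ (since $u\sim u'(0)\,r$), so the argument cannot close by pointwise bounds alone. The plan is to integrate the cross term by parts using $2nuu'=n(u^2)'$, producing the compensating contributions $-\int\frac{2nu^2}{r}(\zeta_1\xi)'\,dr+\int\frac{2nu^2}{r^2}\zeta_1\xi\,dr$ (the boundary terms at $r=0$ and $r=R$ vanish by compact support of $\zeta_1,\xi$), and then to complete the square in a combination of the form $\sqrt{r}\,u\xi' + A(r)\,u\xi + B(r)\,u\zeta_1$ with $A, B$ chosen so that the residual $\zeta_1^2/r^2$, $\xi^2/r$, and $\zeta_1\xi/r$ pieces collapse. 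The additional non-negative square $(u'\zeta_1')^2\,r$ from $\mathcal{D}(\zeta_1)$ is held in reserve to absorb any leftover derivative cross term. This mirrors the stability calculation of \cite{INSZ3, GWZZ} for the polynomial Landau--de Gennes energy; the novelty here is that $c,d,e,h$ are defined only implicitly through the Bingham closure, so the algebraic identities of Section~\ref{sec:reduce}, rather than explicit polynomial arithmetic, are what make the square-completion succeed.
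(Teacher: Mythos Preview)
Your overall strategy---split the form, substitute against $u,u',v'$, and rewrite everything using the identities of Section~\ref{sec:reduce}---is the right one, and pieces (i) and (ii) are handled exactly as in the paper. The gap is in piece (iii): you acknowledge it is the ``main obstacle'' and sketch a plan (integrate $2nuu'=n(u^2)'$ by parts, then complete a square $\sqrt{r}\,u\xi'+A u\xi+Bu\zeta_1$), but you do not carry it out, and as written the block
\[
\int_0^R\Bigl\{\tfrac{2uu'}{r^2}\zeta_1^2+\tfrac{4nuu'}{r}\zeta_1\xi+\tfrac{n^2u^2}{r}\xi^2\Bigr\}dr+\int_0^R u^2\xi'^2\,r\,dr
\]
is not manifestly non-negative for $n>1$; after your integration by parts and the change of variable $\zeta=r\xi$ one is left with a term $\tfrac{2uu'}{r^2}(\zeta_1^2+2n\zeta_1\zeta+\zeta^2)$, which is indefinite unless $n=1$. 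Throwing away the $(n^2-1)(u'\zeta_1)^2/r$ contribution in piece (i), as you do, is exactly what prevents the square from closing.

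The paper closes this gap by two moves you omit. First it reduces to $n=1$ \emph{before} substituting, via the elementary pointwise inequality
\[
4n(\mu_1\nu_2-\mu_2\nu_1)+n^2(\mu_1^2+\nu_1^2+\mu_2^2+\nu_2^2)\ge 4(\mu_1\nu_2-\mu_2\nu_1)+(\mu_1^2+\nu_1^2+\mu_2^2+\nu_2^2),
\]
together with Cauchy--Schwarz on the $\mu_0\mu_1+\nu_0\nu_1$ coupling, collapsing everything to scalar unknowns $\alpha_0,\alpha_1,\alpha_2$. Second, and this is the point that makes your square-completion plan unnecessary, it uses the identity $\mathcal{G}$ of \eqref{eq:E} with the substitution $\alpha_2=\tfrac{u}{r}\zeta$ rather than $\mathcal{B}$ with $\alpha_2=u\xi$. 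The residual of $\mathcal{G}$ contains precisely a $\tfrac{2uu'}{r^3}\zeta^2$ term, which combines with the $\tfrac{2uu'}{r^3}\eta^2$ residual of $\mathcal{D}$ and the cross term $-\tfrac{4uu'}{r^3}\eta\zeta$ to give the perfect square $\tfrac{2uu'}{r^3}(\eta-\zeta)^2$ in one line---no integration by parts or reserved derivative terms needed. Your change of variable $\zeta=r\xi$ is in fact the passage from $\mathcal{B}$ to $\mathcal{G}$; once you see this and restrict to $n=1$, piece~(iii) is immediate.
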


\begin{proof} From the fact that
\begin{align*}
2(\mu_1\nu_2-\mu_2\nu_1) \ge -(\mu_1^2+\nu_1^2+\mu_2^2+\nu_2^2),
\end{align*}
and $n\ge 1$, we get
\begin{align*}
&4n(\mu_1\nu_2-\mu_2\nu_1)+n^2(\mu_1^2+\nu_1^2+\mu_2^2+\nu_2^2)\ge
4(\mu_1\nu_2-\mu_2\nu_1)+(\mu_1^2+\nu_1^2+\mu_2^2+\nu_2^2).
\end{align*}
So, it suffices to consider the case of $n=1$.

On the other hand, we have
\begin{align*}
|\mu_0\mu_1+\nu_0\nu_1|\le \sqrt{\mu_0^2+\nu_0^2}\sqrt{\mu_1^2+\nu_1^2},\\
|\mu_1\nu_2-\mu_2\nu_1|\le \sqrt{\mu_2^2+\nu_2^2}\sqrt{\mu_1^2+\nu_1^2},
\end{align*}
and
$(\partial_r\mu)^2+(\partial_r\nu)^2\ge(\partial_r\sqrt{\mu^2+\nu^2})^2$.
Thus, we only need to prove that for
$\alpha_i=\pm\sqrt{\mu_{i}^2+\nu_{i}^2}$,
\begin{align*}
\tilde{I}^A_1(\alpha_0,
\alpha_1,\alpha_2)=&\int_0^{R}\Big\{(\partial_r\alpha_0)^2+(\partial_r\alpha_1)^2+(\partial_r\alpha_2)^2
-\frac{4}{r^2}\alpha_1\alpha_2+\frac{1}{r^2}(\alpha_0^2+\alpha_1^2+\alpha_2^2)\\
\nonumber&\qquad+\alpha_0^2\Big(\frac{e}{de-c^2}-\alpha\Big)+\alpha_1^2\Big(\frac{d}{de-c^2}-\alpha+\frac{1}{r^2}\Big)\\
\nonumber&\qquad+\alpha_2^2\Big(\frac{1}{r^2}-\alpha+\frac{1}{h}\Big)
-\frac{2c}{de-c^2}\alpha_0\alpha_1\Big\}rdr\geq0.
\end{align*}
Let $\xi=\alpha_0/v'$, $\eta=\alpha_1/u'$, $\zeta=r\alpha_2/u$. We have
\begin{align}\nonumber
&\tilde{I}^A_1(\alpha_0, \alpha_1,\alpha_2)\\\nonumber
&=\mathcal{C}(\xi)+\mathcal{D}(\eta)+\mathcal{G}(\zeta)\\
&\quad+\int_0^{R}\Big\{-\frac{4}{r^3}uu'\eta\zeta+\frac{1}{r^2}\Big((v'\xi)^2+(u'\eta)^2+(\frac{u\zeta}{r})^2\Big)
-\frac{2cu'v'}{de-c^2}\xi\eta\Big\}rdr\nonumber\\\label{ineq:A-1}
&=\int_0^{R}\Big\{(v'\xi')^2+(u'\eta')^2+\Big(\frac{u\zeta'}{r}\Big)^2+\frac{2uu'}{r^3}(\eta-\zeta)^2
+\frac{\sqrt{3}cu'v'}{de-c^2}\big(\eta-\frac{\xi}{\sqrt{3}}\big)^2\Big\}rdr\geq
0.\end{align}
This completes our proof.
\end{proof}

\section{Instability of radial symmetric solutions with degree $|k|>1$}\label{sec:instability}
In this section, we construct perturbations $w_3,w_4\in C_c^\infty(\BR\backslash\{0\})$
such that $I^B(w_3,w_4)$ is negative when $|k|>1$. This implies instability of radial symmetric solutions with degree $|k|>1$
which yields the conclusion of Theorem \ref{thm:instability}.

\par
Let $Q=u(r)F_1+v(r)F_2$ be the radial symmetric solution on $\BR$ with degree $k/2$($|k|>1$) and
$\epsilon$ be a very small parameter. Then there exists $R_0$ such that
for all $r>R_0$,
\begin{align*}
&(1-\epsilon)u(\infty)<u(r)<(1+\epsilon)u(\infty),\\
 &(1-\epsilon)v(\infty)<v(r)<(1+\epsilon)v(\infty).
\end{align*}
Then we take
\beq
w_3(r,\varphi)=q_0(r)+q_1(r)\cos{\varphi},\qquad w_4=q_1(r)\sin{\varphi}.
\eeq
where $q_0,q_1\in C_c^\infty(0,\infty)$. Direct computation gives that
\beq
I^B(w_3,w_4)=\tilde{I}(q_0,q_1):= \int_0^{\infty}\Big\{(\pa_r q_0)^2+(\pa_r q_1)^2
+\frac{q_1^2}{r^2}+\Big(\frac{a+b}{2ab}-\alpha\Big)(q_0^2+q_1^2)-\frac{a-b}{ab}q_0q_1\Big\}rdr> 0,
\eeq
where $\tilde{I}(q_0,q_1)$ is defined in (\ref{ibq}). Then we take
$q_1=u\eta$ and $q_0=-3v\eta$ where $\eta\in C_c^\infty(0,\infty)$. By
similar calculation as before, we obtain
\beq
\tilde{I}(q_0,q_1)=\int_0^{\infty}\Big\{(u^2+9v^2)(\eta')^2-\frac{k^2-1}{r^2}(u\eta)^2\Big\}
rdr,
\eeq
When $r>R_0$, $u$ and $v$ are almost constants and it is easy to find a
test function $\eta_0\in C_c^\infty(R_0,\infty)$ such that
\beq
\int_{R_0}^\infty\Big\{C_1 (\eta_0')^2-\f{C_2}{r^2}\eta_0^2\Big \}rdr<0,
\eeq
where $C_1, C_2$ are any positive constants. Thus Theorem \ref{thm:instability} is proved.


\section{Appendix: rotational gradient operator}\label{sec:appendix}
The rotational gradient operator on unit sphere $\BS$ is defined by
$$\mathcal{R}=\mm \times\nabla_{ \mm }$$
where $\nabla_\mm$ is the usual gradient operator with respect to the standard metric on $\BS$.
Under spherical coordinate $(\theta, \phi)$, it can be written as
\begin{align*}
\CR=&(-\sin\phi\mathbf{e}_1+\cos\phi\mathbf{e}_2)\partial_\theta
-(\cos\theta\cos\phi\mathbf{e}_1+\cos\theta\sin\phi\mathbf{e_2}-\sin\theta\mathbf{e}_3)
\frac{1}{\sin\theta}\partial_\phi\\
\triangleq&\ee_1{\CR}_1+\ee_2{\CR_2}+\ee_3{\CR_3}.
\end{align*}
We list some useful properties of $\CR$ here:
\begin{enumerate}
  \item
$  \int_{\BS}\mathcal{R} f_1 f_2 \ud\mm =-\int_{\BS}f_1\mathcal{R} f_2 \ud\mm.$
  \item $\CR_im_j=-\ve^{ijk}m_k$, where $\ve^{ijk}$ is the Levi-Civita symbol.
  \item $\CR(B:\mm\mm)=2\mm\times(B\cdot\mm)$ for constant matrix $B$.
\end{enumerate}
One may check these facts by direction calculation using the definition of $\CR$. We omit them here.
\section*{Acknowledgment}
Wei Wang is supported by NSF of China under Grant No. 11922118, 11871424 and 11501502. Zhiyuan Geng is supported by the Basque Government through the BERC 2022-2025 program and by the Spanish State Research Agency through BCAM Severo Ochoa excellence accreditation SEV-2017-0718 and through project PID2020-114189RB-I00 funded by Agencia Estatal de Investigaci\'{o}n (PID2020-114189RB-I00 / AEI / 10.13039/501100011033).

\end{document}